\documentclass[10pt]{amsart}

\usepackage[all]{xy}
\usepackage[latin1]{inputenc} 
\usepackage[T1]{fontenc}
\usepackage[dvips]{graphics,graphicx}
\usepackage{amsfonts,mathabx}
\usepackage{amssymb}
\usepackage{amsmath, amsrefs}
\usepackage{mathtools}
\usepackage{esint}
\allowdisplaybreaks

\usepackage{amsbsy,
	amsopn,
	amscd, 
	amsxtra, 
	amsthm,
	verbatim}
\usepackage{upref}
\usepackage{color}
\usepackage{enumitem}
\usepackage[colorlinks,
linkcolor=red,
anchorcolor=red,
citecolor=red
]{hyperref}

\usepackage{thmtools}
\usepackage{caption}
\usepackage{subcaption}

\usepackage{booktabs}
\usepackage{longtable}

\newtheorem{theorem}{Theorem}
\newtheorem{lemma}{Lemma}[section]

\newtheorem*{proposition*}{Proposition}

\newtheorem*{corollary*}{Corollary}

\newtheorem{assumption}{Assumption}

\theoremstyle{remark}
\newtheorem{remark}[lemma]{Remark}

\usepackage{geometry}
\geometry{left=3.5cm,
	right=3.5cm,
	top=3cm,
	bottom=3cm}

\usepackage{float}

\usepackage{todonotes}

\newcommand{\eg}{{e.g.}}
\makeatletter
\newcommand*{\rom}[1]{\expandafter\@slowromancap\romannumeral #1@}
\makeatother

\newcommand{\wb}[1]{\widebar{#1}}
\newcommand{\mc}[1]{\mathcal{#1}}

\providecommand{\keywords}[1]{\textbf{\textit{Keywords: }} #1}


\newcommand{\ud}{\,\mathrm{d}}
\newcommand{\rd}{\mathrm{d}}

\newcommand{\RR}{\mathbb{R}}

\newcommand{\abs}[1]{\lvert#1\rvert}

\newcommand{\norm}[1]{\lVert#1\rVert}
\newcommand{\Lap}{\Delta_{t,x}}

\newlength{\leftstackrelawd}
\newlength{\leftstackrelbwd}
\def\leftstackrel#1#2{\settowidth{\leftstackrelawd}%
{${{}^{#1}}$}\settowidth{\leftstackrelbwd}{$#2$}%
\addtolength{\leftstackrelawd}{-\leftstackrelbwd}%
\leavevmode\ifthenelse{\lengthtest{\leftstackrelawd>0pt}}%
{\kern-.5\leftstackrelawd}{}\mathrel{\mathop{#2}\limits^{#1}}}


\def\bigl{\mathopen\big}

\def\bigr{\mathclose\big}

\newcommand{\id}{\mathcal{I}}

\newcommand{\A}{\mathcal{A}}
\newcommand{\B}{\mathcal{B}}
\newcommand{\J}{\mathcal{J}}
\newcommand{\nn}{\mathfrak{n}}
\newcommand{\opL}{\mathcal{L}}

\newcommand{\stationary}{\rho_{\infty}}

\title[$L^2$-convergence rate for PDMP]{On explicit $L^2$-convergence rate estimate for piecewise deterministic Markov processes in MCMC Algorithms}

\author{Jianfeng Lu}
\address{Department of Mathematics, Department of Physics, and Department of Chemistry, Duke University, Durham NC 27708}
\email{jianfeng@math.duke.edu}
\author{Lihan Wang}
\address{Department of Mathematics, Duke University, Durham NC 27708}
\email{lihan@math.duke.edu} \date{\today}

\begin{document}

\begin{abstract}
We establish $L^2$-exponential convergence rate for three popular piecewise deterministic Markov processes for sampling: the randomized Hamiltonian Monte Carlo method, the zigzag process, and the bouncy particle sampler. Our analysis is based on a variational framework for hypocoercivity, which combines a Poincar\'{e}-type inequality in time-augmented state space and a standard $L^2$ energy estimate. Our analysis provides explicit convergence rate estimates, which are more quantitative than existing results. 
\end{abstract}
\keywords{piecewise deterministic Markov process; Poincar\'{e}-type inequality; convergence rate; hypocoercivity;}
\maketitle

\section{Introduction}
Sampling approaches based on piecewise deterministic Markov processes
(PDMPs) \cite{davis1984piecewise}, which involve random jumps and
deterministic trajectories in between, have recently attracted a lot
of attention: several classes of Markov Chain Monte Carlo (MCMC)
algorithms have been developed based on PDMPs, including the
randomized Hamiltonian Monte Carlo (RHMC)
\cite{duane1987hybrid, bou2017randomized}, the zigzag process \cite{bierkens2019zig} and the
bouncy particle sampler \cites{peters2012rejection,
  bouchard2018bouncy}. Compared with MCMC algorithms based on
diffusion, such as overdamped and underdamped Langevin Monte Carlo,
the methods based on PDMPs do not need time discretization for the
random part and the deterministic dynamics can either be explicitly
integrated (for zigzag and bouncy particle) or be dealt with high
order numerical integration (for RHMC), which make them promising to
have better numerical performance \cites{bierkens2016non,
  bierkens2018piecewise, fearnhead2018piecewise, bou2017randomized, bou2018geometric, sen2020efficient}. The
zigzag and bouncy particle samplers are also suitable for the big data
situation, as they can be unbiased even if stochastic gradient is
used \cites{bouchard2018bouncy, bierkens2019zig}. 

Typical PDMPs for sampling purpose introduce an auxiliary ``velocity''
variable $v\in \RR^d$ that facilitates simulation, which is often
chosen from a fixed distribution. For this paper, we will only
consider the case that the velocity variable is drawn from the standard
Gaussian distribution
$\ud \kappa(v)=(2\pi)^{-\frac{d}{2}}e^{-\frac{|v|^2}{2}}\ud v$.  In
the PDMPs, the velocity variable is redrawn independently from the
Gaussian distribution at a certain rate, and between two redraws the
trajectory of state variable $(x, v)$ consists of deterministic routes
and random bounces so that the spatial variable $x$ will explore the
state space in all different directions with the help of $v$. The
PDMPs are designed so that the $x$ samples the desired target
distribution. 

We now present the general mathematical formulation of PDMPs. Let
$f=f(t,x,v): \, \RR_+\times \RR^d \times \RR^d \to \RR $ be the
expectation of some observable function $f_0(x,v)$ at time $t$, and
therefore satisfies the backward Kolmogorov equation
\begin{equation}\label{eqn:pdmpeqn}
  \partial_t f = \opL f, \qquad f(t=0,x,v)=f_0(x,v),
\end{equation}
where the infinitesimal generator $\opL$ associated with PDMPs is
given by
\begin{equation}\label{eqn:opL}
  \opL=v\cdot \nabla_x-F_0(x) \cdot \nabla_v +\sum_{k=1}^K (v\cdot F_k(x))_+ (\B_k-\id) +\gamma (\Pi_v-\id) .
\end{equation}
Here $\id$ denotes the identity operator, and $(s)_+:=\max\{s,0\}$. The vector fields
$F_k: \RR^d \to \RR^d$, $k = 0, 1, \ldots, K$ depend only on the
position variable $x$ (examples will be discussed below). The jump
operators $\B_k$ correspond to reflections of the velocity variable
through the hyperplanes orthogonal to $F_k$, defined as
\begin{equation}\label{eqn:bk}
  \B_k f(t,x,v):=f\bigl(t,x,v-2(v\cdot n_k(x))n_k(x)\bigr), 
\end{equation}
where
\begin{equation}
  n_k(x) = \begin{cases}
    F_k(x)/|F_k(x)| & \mbox{ if }F_k(x)\neq 0, \\
    0 & \mbox{ otherwise},
  \end{cases}
\end{equation}
and $\Pi_v$ is the projection operator on Gaussian with respect to $v$ variable
\begin{equation}
  \label{eqn:Pi_v}
  (\Pi_v f)(t,x) := \int_{\RR^d} f(t,x,v)\ud \kappa(v).
\end{equation}
In \eqref{eqn:opL}, $\gamma>0$ is the refreshment rate of the
velocity variable, whose choice will impact the convergence rate of
the dynamics. Our analysis will provide optimal choices of $\gamma$.

Different PDMPs correspond to different choices of the vector fields
$F_k$. While our framework can be generalized to various situations, for definiteness, we will only focus on the three most prominent
examples:
\begin{itemize}
\item The randomized Hamiltonian Monte Carlo (RHMC) \cite{duane1987hybrid,bou2017randomized} corresponds to the choice $K=0$ and $F_0=\nabla_x U$, where $U$ is some potential function.  The corresponding equation \eqref{eqn:pdmpeqn} can be seen as a particular linear Boltzmann equation \cite{bhatnagar1954model} with the collision operator given by $\gamma (\Pi_v - \mc{I})$;
\item The zigzag process (ZZ) \cite{bierkens2019zig} corresponds to $K=d$, $F_0=0$ and $F_k=\partial_{x_k}U e_k$ where $(e_k)_{k\in \{1,\cdots,d\}}$ is the canonical basis of $\RR^d$; 
\item The bouncy particle sampler (BPS) corresponds to the choice $K=1$, $F_0=0$ and $F_1=\nabla_x U$. The BPS was first proposed in \cite{peters2012rejection} and extended in \cite{bouchard2018bouncy}.
\end{itemize}
All these PDMP processes above satisfy $\sum_{k=0}^K F_k=\nabla_x U$, and thus are designed so that they admit a unique stationary distribution \cites{bou2017randomized, bierkens2019ergodicity, monmarche2016piecewise} given by
\begin{align}
	\ud \stationary(x, v) = \ud \mu_U(x) \ud \kappa(v),
\end{align}
where 
\begin{align*}
	\ud \mu_U(x) = \frac{1}{Z_U}e^{-U(x)}\ud x, \quad \qquad Z_U = \int_{\RR^d} e^{- U(x)}\ud x.
\end{align*}
Other PDMPs have been proposed for sampling purposes, including 
Hamiltonian BPS \cite{vanetti2017piecewise}, the Coordinate Sampler \cite{robert2019coordinate}, the Gibbs zigzag sampler \cite{sachs2020posterior}, the Boomerang sampler \cite{bierkens2020boomerang}, and more general bounces involving randomization \cite{vanetti2017piecewise, michel2020forward,wu2017generalized}. While our framework can be generalized to these algorithms, we will not consider these variants in this work. 
	
Our goal is to derive explicit decay rate estimates in $L^2$ for PDMPs, based on the variational framework developed in \cite{armstrong2019variational} and our previous work for the underdamped Langevin dynamics \cite{cao2019explicit}, the idea of which originates from the pioneering work \cite{hormander1967hypoelliptic}. More precisely, we will obtain explicit estimates for some $\nu > 0$ and a universal constant $C>1$ independent of $U,~\gamma$ and $d$ such that for $f=f(t,x,v)$ solving \eqref{eqn:opL} and $\int f_0 \ud\stationary = 0$, we have
\begin{equation}  \label{eqn::exp_decay}
  \norm{f(t, \cdot, \cdot)}_{L^2_{\stationary}} \le Ce^{-\nu t}\norm{f_0}_{L^2_{\stationary}} .
\end{equation} 

Geometric convergence for ZZ has been established in \cite{bierkens2019ergodicity} and for BPS in \cite{monmarche2016piecewise, deligiannidis2019exponential, durmus2018geometric}, however the expressions of the convergence rates are either implicit or complicated. The work \cite{andrieu2018hypocoercivity} established explicit convergence rate for these processes, however only in terms of the dimension $d$; the comparison of their result with ours will be further elaborated below after we present our main results. 

Other theoretical studies of the PDMPs include scaling limits and spectral analysis: The work \cite{deligiannidis2018randomized} established the scaling limit of first coordinate for BPS, and \cite{bierkens2018high} proved scaling limits of ZZ and BPS for several statistical observables. Spectral analysis of PDMP were considered in \cite{miclo2013etude, bierkens2019spectral} in one-dimension and \cite{guillin2020low} for the metastable regime. 

More generally, convergence result of type \eqref{eqn::exp_decay} for hypocoercive equations was established in $H^1(\stationary)$ in \cite{villani_hypocoercivity_2009, mouhot2006quantitative} for a class of kinetic equations. Hypocoercivity estimate in terms of a modified $L^2$ space was developed in \cites{herau2006hypocoercivity, dolbeault2009hypocoercivity, dolbeault2015hypocoercivity} and a series of works based on this framework \cite{roussel2018spectral,bernard2020hypocoercivity,grothaus2014hypocoercivity}. 

\subsection*{Notations}
Throughout the paper we assume $I$ to be the
time interval $(0,T)$, and we use $\ud \lambda(t)=\chi_{(0,T)}(t)\ud t$ to denote the Lebesgue measure on $I$. Define $C_b^k$ to be the set of functions $f$ such that are $k$-times differentiable with bounded derivatives up to order $k$. We define the Sobolev space
\[H^1(\mu_U):=\{f: f(x)\in L^2(\mu_U) \mbox{ and
  }\partial_{x_k} f\in L^2(\mu_U), \ \forall k=1,\cdots,d
  \}.\]
We also define
\[L^2(\lambda\times \mu_U):=\{f=f(t,x): \int_{I\times\RR^d} f^2\ud
  t\ud\mu_U(x)<\infty\},\]and its corresponding
norm
\begin{equation*}
  \norm{f}_{L^2(\lambda\times \mu_U)} := \bigl(\int_{I\times\RR^d}
  f^2\ud t\ud\mu_U(x)\bigr)^\frac{1}{2}.
\end{equation*}
The space $L^2(\lambda\times \stationary)$ for functions on
$I \times \RR^d \times \RR^d$ and its corresponding norm are defined
similarly. We define the average of $f: I \times \RR^d \times \RR^d \to \RR$ over $\lambda\times \stationary$ as
\begin{equation*}
  (f)_{\lambda\times \stationary} :=\dfrac{1}{T}\int_{I\times \mathbb{R}^d\times
    \mathbb{R}^d} f(t,x,v)\ud t\ud\stationary(x,v),
\end{equation*} and for $g: I\times \RR^d\to \RR$ we define its average over $\lambda\times \mu_U$ as \begin{equation*}
  (g)_{\lambda\times \mu_U} :=\dfrac{1}{T}\int_{I\times \mathbb{R}^d} g(t,x)\ud t\ud\mu_U(x).
\end{equation*}
We use
\[\nabla_x^*F:=-\nabla_x\cdot F+F\cdot\nabla_x U\] to denote the
$L^2(\mu_U)$-adjoint operator of $\nabla_x$. For time-augmented state space $I\times \RR^d$ equipped with measure $\lambda\times \mu_U$, we use the convention $\partial_{x_0}:=\partial_t$, the short-hand notation $\wb{\nabla}:=(\partial_t,\nabla_x)^\top$, and the notation $\Lap:=-\partial_{tt}+\nabla_x^*\nabla_x$ to denote the ``Laplace'' operator on $L^2(\lambda\times \mu_U)$.

\subsection{Assumptions and Main Results}Below are three fundamental assumptions that $U(x)$ must satisfy in our framework. The convergence rate gets better if we have stronger assumptions on $U$.

\begin{assumption}[Poincar\'{e} inequality for $\mu_U$]\label{assump:poincare}
  The measure $\ud\mu_U$ corresponding to $U(x)$ satisfies a Poincar\'{e}
  inequality with constant $m>0$:
  \begin{equation}
    \label{eqn:spatialpoincare}
    \int_{\mathbb{R}^d} \left(f-\int_{\mathbb{R}^d} f \ud\mu_U\right)^2\rd\mu_U \le \dfrac{1}{m}\int_{\mathbb{R}^d} |\nabla f|^2 \ud\mu_U, \qquad \forall f\in H^1(\mu_U).
  \end{equation}
\end{assumption}

\begin{assumption}\label{assump:hessian}
  The potential $U\in C^2(\RR^d)$, and the Hessian of $U$,
  $\nabla^2 U$ satisfies
  \begin{equation}\label{eqn:stoltzcond9}
    \|\nabla^2 U(x) \| \le M(1 + |\nabla U(x)|), \qquad \forall \ x\in \mathbb{R}^d
  \end{equation}
  for some constant $M\ge 1$, where $\norm{\cdot}$ denotes the matrix
  operator norm
  \[\norm{A}:=\sup_{\xi\in \RR^d \backslash \{0\}} \dfrac{|A
      \xi|}{|\xi|}.\]
\end{assumption}

\begin{assumption}\label{assump:spectral}
  The embedding $H^1(\mu_U) \xhookrightarrow{} L^2(\mu_U)$ is compact.
\end{assumption}

The Assumption~\ref{assump:hessian} is commonly used in the
literature, see \eg{}, the books
\cite{pavliotis2014stochastic,villani_hypocoercivity_2009} for
underdamped Langevin dynamics, and is satisfied when $U$ grows at most
exponentially fast as $x\to\infty$. Assumption~\ref{assump:spectral}
is satisfied as long as
\begin{equation*}
  \lim_{|x|\to\infty}\dfrac{U(x)}{|x|^\alpha}=\infty
\end{equation*}
for some $\alpha>1$ (see \cite{hooton1981compact} for a proof). While
previous works on hypocoercivity \cite{dolbeault2015hypocoercivity}
and works following its framework \cite{roussel2018spectral,
  andrieu2018hypocoercivity, bernard2020hypocoercivity} use elliptic
regularity estimate in $x$ for which Assumption \ref{assump:poincare}
suffices, our proof, in particular the construction of test functions
in Lemma \ref{lem:truetestfn}, relies on spectral decomposition of the
operator $\nabla_x^*\nabla_x$, which is only guaranteed through the
slightly stronger Assumption \ref{assump:spectral}.

\smallskip

It is established in \cite{durmus2018piecewise} that BPS and ZZ are well-defined Markov process whose generators admit $C_b^2(\RR^d\times \RR^d)$ as a core, and similar arguments can be used to establish that RHMC generator has the same core. It is standard that $C_b^2(\RR^d\times \RR^d)$ is dense in $L^2(\stationary)$. Moreover, the operator $\opL$ is closed in $L^2(\stationary)$, and generates a strongly continuous contraction semigroup $(P_t)_{t\ge 0}$ on $L^2(\stationary)$. This sets up the basic regularity assumptions needed in this work.

\smallskip
	
Below we present the main result of this work.
\begin{theorem}\label{thm:expconvfull}
  Under Assumptions~\ref{assump:poincare}, ~\ref{assump:hessian},
  and~\ref{assump:spectral}, there exist a constant $\nu > 0$ and
  universal constants $C_0,c_0$ independent of all parameters such
  that, for any $f$ satisfying $f_0 \in L^2(\stationary)$ and
  \begin{equation}\label{eqn:meanzero}
    \int_{\mathbb{R}^d\times \mathbb{R}^d} f_0\ud\stationary(x,v)=0,
  \end{equation} 
  and solving the PDMP equation \eqref{eqn:pdmpeqn}, we have for every $t>0$,
  \begin{equation}\label{eqn:expcvg}
    \norm{f(t,\cdot)}_{L^2(\stationary)} \le C_0\exp(-\nu t)\|f_0\|_{L^2(\stationary)}.
  \end{equation}
  Moreover, let $R$ be the parameter that describes the ``convexity
  barrier'' of $U$, defined as
  \begin{equation}\label{eqn:convbar}
    R=R(U):= \begin{cases}
      0, & \text{if } U \text{ is convex}; \\
      \sqrt{L},  & \text{if } \nabla_x^2 U(x) \ge -L\id, \ \forall x; \\
      M\sqrt{d}, & \text{if only \eqref{eqn:stoltzcond9}  is assumed.}
    \end{cases}
  \end{equation}
  Then, there exists a universal constant $C$, independent of all parameters, such that the convergence rate
  $\nu$ can be explicitly estimated for the three PDMPs as
  \begin{equation}\label{eqn:rate}
    \nu\ge C
    \begin{cases}
      \dfrac{m\gamma}{(\sqrt{m}+R+\gamma)^2}, & \text{ for
        RHMC;} \\
      \dfrac{m\gamma}{(\sqrt{m}+R_{ZZ}+\gamma)^2}, & \text{ for ZZ;} \\
      \dfrac{m\gamma}{(\sqrt{dm}+R\sqrt{d}+\gamma)^2}, &
      \text{ for BPS}.
    \end{cases}
  \end{equation}
  Here $R_{ZZ}=\sqrt{L}$ if $\norm{\nabla_x^2 U}\le L, \forall x$ and
  $R_{ZZ}=M\sqrt{d}$ otherwise.
\end{theorem}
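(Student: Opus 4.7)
The plan is to follow the variational hypocoercivity framework the authors developed for underdamped Langevin dynamics in \cite{cao2019explicit}, now adapted uniformly to all three PDMPs. The central observation is that the generator $\opL$ splits into a skew-adjoint transport part and a symmetric dissipative part, and the latter only directly controls the velocity fluctuation $f - \Pi_v f$. Exponential decay will follow from a space-time Poincar\'e-type inequality on $I \times \RR^d\times\RR^d$ that, when evaluated on solutions of the PDMP, produces a bound of the form
\begin{equation*}
\int_0^T \bigl\|f(t) - (f)_{\lambda\times\stationary}\bigr\|_{L^2(\stationary)}^2 \, dt \le C(m,R,\gamma,d)\int_0^T \mathcal{D}(f(t))\, dt,
\end{equation*}
where $\mathcal{D}(f) := -(f,\opL f)_{L^2(\stationary)}$ is the Dirichlet form. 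Combined with the energy identity $\tfrac{d}{dt}\|f\|_{L^2(\stationary)}^2 = -2\mathcal{D}(f)$, this yields \eqref{eqn:expcvg} via a Gronwall argument with explicit rate $\nu \sim 1/C$.

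First, I would compute $\mathcal{D}$ and separate the skew and symmetric pieces. Using $\sum_k F_k = \nabla_x U$, the transport part $v\cdot\nabla_x - F_0\cdot\nabla_v$ is skew-adjoint in $L^2(\stationary)$; the bounces $(v\cdot F_k)_+(\B_k-\id)$ and refreshment $\gamma(\Pi_v-\id)$ are dissipative, giving at least $\mathcal{D}(f) \ge \gamma\|f-\Pi_v f\|_{L^2(\stationary)}^2$. Next I decompose $f = g + (f-g)$ with $g=\Pi_v f$; the high-frequency piece $f-g$ is controlled by $\mathcal{D}(f)$ directly, leaving only $g(t,x)$, which I would then bound using the space-time Poincar\'e inequality associated with $\wb{\nabla} = (\partial_t,\nabla_x)^\top$ on $L^2(\lambda\times\mu_U)$. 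The spectral decomposition of $\nabla_x^*\nabla_x$ (enabled by Assumption~\ref{assump:spectral}) is used to construct the correct auxiliary test functions needed to express $\nabla_x g$ via duality.

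The crux, and the expected main obstacle, lies in bounding $\partial_t g$ and $\nabla_x g$ in terms of $\mathcal{D}(f)$ and $f-\Pi_v f$ by using the PDMP equation itself. Taking $\Pi_v$ of $\partial_t f = \opL f$ gives $\partial_t g = -\Pi_v(v\cdot\nabla_x(f-\Pi_v f))$ up to contributions from the bounce and drift terms which vanish under $\Pi_v$ by Gaussian symmetry; this reduces $\|\partial_t g\|$ to controlling $\nabla_x(f-\Pi_v f)$. For $\nabla_x g$, a duality argument against eigenfunctions of $\nabla_x^*\nabla_x$ lets one commute $\nabla_x$ through $\opL$; the commutator produces terms involving $\nabla^2 U$, controlled by Assumption~\ref{assump:hessian} and the convexity-barrier parameter $R$ of \eqref{eqn:convbar}. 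The three PDMPs differ precisely here: RHMC has the cleanest commutator since $F_0 = \nabla_x U$ couples velocities directly; ZZ has $d$ independent coordinate bounces, so each component of $\nabla_x g$ is controlled separately without dimension loss; BPS has only a single bounce direction along $\nabla_x U$, which yields only a one-dimensional projection of $\nabla_x g$ and forces the extra $\sqrt{d}$ factors in \eqref{eqn:rate}.

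Finally, tracking the constants carefully through the chain of inequalities produces the explicit rate \eqref{eqn:rate}. The form $\gamma/(\sqrt{m}+R+\gamma)^2$ arises from balancing the refreshment rate $\gamma$ against the Poincar\'e constant $\sqrt{m}$ (governing $x$-mixing in the spectral step) and the convexity barrier $R$ (governing the cost of commuting $\nabla_x$ through $\opL$). Optimizing over $\gamma$ then recovers the best refreshment rate as a byproduct. The universal constant $C_0$ in \eqref{eqn:expcvg} absorbs the Jensen-type gap between the time-averaged bound produced by the variational argument and the pointwise norm, using the contractivity of the semigroup $P_t$ on $L^2(\stationary)$.
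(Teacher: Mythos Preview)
Your overall architecture is right and matches the paper: split $\opL$ into skew plus dissipative parts, use the energy identity to get $\tfrac{d}{dt}\|f\|^2 \le -2\gamma\|f-\Pi_v f\|^2$, prove a space-time Poincar\'e-type inequality on $I\times\RR^d\times\RR^d$, combine, and iterate over time windows of length $T$ (then optimize $T$). So far so good.

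The gap is in how you plan to prove the space-time inequality. You propose to bound $\|\partial_t g\|$ and $\|\nabla_x g\|$ (with $g=\Pi_v f$) in terms of $\mathcal{D}(f)$ and $\|f-\Pi_v f\|$. Concretely you write that projecting the equation gives $\partial_t g = -\Pi_v\bigl(v\cdot\nabla_x(f-\Pi_v f)\bigr)$ and that this ``reduces $\|\partial_t g\|$ to controlling $\nabla_x(f-\Pi_v f)$.'' But the dissipation only yields $L^2$ control of $f-\Pi_v f$, not of its spatial gradient; there is no mechanism in your scheme that upgrades $\|f-\Pi_v f\|_{L^2}$ to $\|\nabla_x(f-\Pi_v f)\|_{L^2}$. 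The same objection applies to your plan for $\nabla_x g$ via commuting $\nabla_x$ through $\opL$: that commutator strategy belongs to $H^1$-hypocoercivity and needs $H^1$ information on $f$, which the refreshment operator does not supply. As written, the argument cannot close.

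The paper avoids this obstacle entirely by working \emph{dually}: instead of estimating $\|\wb\nabla g\|$, it constructs test functions $\wb\phi=(\phi_0,\phi)$ solving the divergence equation $-\partial_t\phi_0+\nabla_x^*\phi=\Pi_v f$ on $I\times\RR^d$ with $\wb\phi(0,\cdot)=\wb\phi(T,\cdot)=0$ (this is where the spectral decomposition from Assumption~\ref{assump:spectral} is used, and where $R$ enters through the $H^1$-estimates on $\wb\phi$). One then writes $\|\Pi_v f\|^2=\int \Pi_v f\,(-\partial_t\phi_0+\nabla_x^*\phi)$ and integrates by parts so that \emph{all} derivatives land on $\wb\phi$, never on $f$. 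The resulting expression is $\int (\A f)(\phi_0-v\cdot\phi)$ up to a remainder $\int (f-\Pi_v f)\,\J$ with $\J$ built from $\wb\phi$, $\wb\nabla\wb\phi$, and $\nabla_x U$. On solutions, $\A f=\gamma(\Pi_v f-f)$, so both pieces are bounded by $\|f-\Pi_v f\|_{L^2}$ times quantities depending only on $\wb\phi$. The PDMP-specific work (and the source of the $\sqrt d$ for BPS versus the dimension-free bound for RHMC and ZZ) is the estimate of $\|\J\|_{L^2(\lambda\times\stationary)}$, which requires computing Gaussian moments of the bounce terms $(v\cdot F_k)_+$ against $\wb\phi$; this is where the differences between the three samplers actually manifest, not in a commutator with $\nabla_x$.
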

\noindent We will prove this theorem in Section~\ref{sec:proofs}.

Given the expression of \eqref{eqn:rate}, we can choose the optimal
$\gamma$ to maximize the rate $\nu$ for the three PDMPs:\begin{equation}
  \gamma =
  \begin{cases}
    \sqrt{m}+R, & \text{ for RHMC;} \\
    \sqrt{m}+R_{ZZ}, & \text{ for ZZ;} \\
    \sqrt{dm}+R\sqrt{d}, & \text{ for BPS.}
  \end{cases}
\end{equation} 
Therefore the optimal convergence rate is given by \begin{equation}
  \nu \ge C
  \begin{cases}
    \dfrac{m}{\sqrt{m}+R}, & \text{ for RHMC;} \\
    \dfrac{m}{\sqrt{m}+R_{ZZ}}, & \text{ for ZZ;} \\
    \dfrac{m}{\sqrt{dm}+R\sqrt{d}}, & \text{ for BPS.}
  \end{cases}
\end{equation}
Table~\ref{tab:rate} summarizes the result under the assumption
$m\id \le \nabla_x^2 U \le L \id$ (and hence guarantee Assumptions
\ref{assump:poincare}-\ref{assump:spectral}) in the most interesting
regime $m\ll 1 \ll L$, with optimal choice of $\gamma$.
\begin{table}[htpb]
  \centering
  \begin{tabular}{c|c|c}
    & convergence rate $\nu$ & optimal $\gamma$ \\[.5mm] \hline\hline
    RHMC  & $O(\sqrt{m})$ & $\sqrt{m}$ \\ \hline ZZ & $O(\dfrac{m}{\sqrt{L}})$ & $\sqrt{L}$ \\ \hline BPS & $O(\sqrt{\dfrac{m}{d}})$ & $\sqrt{dm}$
  \end{tabular}
  \caption{Summary of lower bound on the convergence rate $\nu$ and optimal choice of
    $\gamma$ depending on $d,m,L$ under the assumption
    $m\id \le \nabla_x^2 U \le L \id$ for the regime $m\ll 1 \ll L$.}
  \label{tab:rate}
\end{table}

Compared to \cite{andrieu2018hypocoercivity}, 
we are able to derive an explicit scaling of $\nu$ not only on $d$, but also explicitly on 
$m, L$ as well. For RHMC, we obtain the optimal convergence rate $O(\sqrt{m})$, which is the same as for the underdamped Langevin dynamics \cite{cao2019explicit}. The $O(\sqrt{m})$ rate is optimal as can be checked for the Gaussian case  $U(x)=\frac{m \abs{x}^2}{2}$. For zigzag process, 
we are able to derive dimension independent convergence rate with the 
smoothness assumption $\|\nabla_x^2 U\| \le L$, which is more quantitative than the result in 
\cite{andrieu2018hypocoercivity}. 
Finally, although we are
unable to obtain a dimension independent rate for BPS, our rate $O(d^{-1/2})$ under
the assumption $\nabla_x^2 U \ge -L\id$ is still an
improvement from the rate in \cite{andrieu2018hypocoercivity}, whose estimate provides a rate of $O(d^{-(1+\omega)/2})$ under the assumption $\Delta_x U(x) \le cd^{1+\omega}+|\nabla_x U(x)|^2/2$. It is unclear whether a dimension independent convergence rate is
possible for BPS.

\smallskip

Before we move on to the proof of Theorem \ref{thm:expconvfull}, let us give a brief introduction on the strategy of the proof. A naive energy estimate yields \begin{align*}
    \dfrac{\ud}{\ud t} \|f(t, \cdot)\|_{L^2(\stationary)}^2 & =-\dfrac{1}{2}\sum_{k=1}^K \int_{(s,t)\times\RR^d\times \RR^d}
      |v\cdot F_k| (f-\B_k f)^2 \ud t\ud \stationary \\ & \qquad -2\gamma
      \int_{(s,t)\times\RR^d\times \RR^d}
      (f-\Pi_v f)^2 \ud t\ud \stationary.
\end{align*} 
While the above establishes the $L^2$ energy decay, it does not directly yield exponential decay rate.
In particular, the energy dissipation is only present in velocity variable. However, instead of looking at single time layers, we should look at time intervals, since after time propagation, the dissipation in $v$ together with the transport and bouncing terms in $x$ will lead to dissipation in $x$. With the help a Poincar\'e-type inequality in the augmented state space established in Theorem \ref{thm:bcpoincare}, we can prove exponential convergence still using the standard energy estimate, with $\gamma(\Pi_v-\id)$ playing the role of ``dissipation'', in line with the moral ``hypocoercivity is simply coercivity with respect to the correct norm'', quoted from \cite[Page 4]{armstrong2019variational}. 

\section{Proofs}\label{sec:proofs}

We first state the following modified Poincar\'{e}-type inequality
that generalizes \cite[Theorem 1.2]{armstrong2019variational} and
\cite[Theorem 2]{cao2019explicit} to the PDMP dynamics under
consideration.
\begin{theorem}\label{thm:bcpoincare}
 There exists a constant $C$
  independent of all parameters such that for all functions $f\in C^1_b(I\times\RR^d\times\RR^d)$, 
  \begin{multline}\label{eqn:hypopoincare}
    \|f-(f)_{\lambda\times \stationary}\|_{L^2(\lambda\times \stationary)} \le
\bigl(1+C_{\J}\bigr) \bigl\lVert f-\Pi_v f\bigr\rVert_{L^2(\lambda\times \stationary)}
    \\ +C(\dfrac{1}{\sqrt{m}}+T) \bigl\lVert \partial_t f - v\cdot\nabla_x f-
    \sum_{k=1}^K (v\cdot F_k)_+(\B_k-I)f+F_0\cdot \nabla_v f
    \bigr\rVert_{L^2(\lambda\times \stationary)}.
  \end{multline} 
  Here $C_\J$ is a constant defined as
  \begin{equation}\label{eqn:magcj}
    C_\J = \begin{cases}
      C\Bigl(1+\dfrac{1}{\sqrt{m}T}+\dfrac{R}{\sqrt{m}}+RT\Bigr), & \text{ for RHMC;} \\
      C\Bigl(1+\dfrac{1}{\sqrt{m}T}+\dfrac{R_{ZZ}}{\sqrt{m}}+R_{ZZ}T\Bigr), & \text{ for ZZ;} \\
      C\sqrt{d}\Bigl(1+\dfrac{1}{\sqrt{m}T}+\dfrac{R}{\sqrt{m}}+RT\Bigr), & \text{ for BPS,}
    \end{cases}
  \end{equation} where $R$ and $R_{ZZ}$ are the same quantities as defined in Theorem \ref{thm:expconvfull}.
\end{theorem}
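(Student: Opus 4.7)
The plan is to reduce the desired inequality to a Poincar\'e-type inequality on the time-augmented base space $I\times\RR^d$ via an orthogonal split in the velocity variable, and then use duality with a carefully constructed test function that matches the PDMP transport-plus-bounce operator. By the triangle inequality and the identity $(\Pi_v f)_{\lambda\times \mu_U}=(f)_{\lambda\times\stationary}$,
\begin{equation*}
\|f-(f)_{\lambda\times\stationary}\|_{L^2(\lambda\times\stationary)}\le \|f-\Pi_v f\|_{L^2(\lambda\times\stationary)}+\|\Pi_v f-(\Pi_v f)_{\lambda\times\mu_U}\|_{L^2(\lambda\times\mu_U)},
\end{equation*}
which already accounts for the first summand on the right-hand side of \eqref{eqn:hypopoincare} with constant $1$. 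It therefore suffices to bound the second term by $C_{\J}\|f-\Pi_v f\|+C(\tfrac{1}{\sqrt{m}}+T)\|\mc{T}f\|$, where $\mc{T}f:=\partial_t f-v\cdot\nabla_x f-\sum_k(v\cdot F_k)_+(\B_k-\id)f+F_0\cdot\nabla_v f$ is the non-dissipative part of $\partial_t f-\opL f$ appearing in the statement.

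Write $g:=\Pi_v f$ and $h:=g-(g)_{\lambda\times\mu_U}$, so that $\|h\|^2_{L^2(\lambda\times\mu_U)}=(g,h)_{\lambda\times\mu_U}$. Using Assumption~\ref{assump:spectral}, the operator $\nabla_x^*\nabla_x$ on $L^2(\mu_U)$ has discrete spectrum; combined with the Fourier/cosine decomposition in $t\in(0,T)$, one can solve the time-augmented elliptic problem $\Lap \phi=h$ with Neumann conditions at $\partial I$, and obtain via Assumption~\ref{assump:poincare} the a priori estimates
\begin{equation*}
\|\partial_t\phi\|_{L^2(\lambda\times\mu_U)}+\|\nabla_x\phi\|_{L^2(\lambda\times\mu_U)}+\tfrac{1}{\sqrt m}\|\nabla_x^2\phi\|_{L^2(\lambda\times\mu_U)}\le C\bigl(\tfrac{1}{\sqrt m}+T\bigr)\|h\|_{L^2(\lambda\times\mu_U)},
\end{equation*}
which is exactly the source of the factor $C(\tfrac{1}{\sqrt m}+T)$ in \eqref{eqn:hypopoincare}. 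This is the analogue of the scalar Poincar\'e/Lions-type estimate used in \cite{armstrong2019variational} and \cite{cao2019explicit}.

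The crux is then to convert $(g,h)_{\lambda\times\mu_U}=(g,\Lap\phi)_{\lambda\times\mu_U}$ into a pairing against the operator $\mc{T}f$. Following the strategy of Lemma~\ref{lem:truetestfn}, one lifts $\phi$ to a test function $\psi(t,x,v)$ on $I\times\RR^d\times\RR^d$, linear in $v$ in the RHMC case (roughly $\psi\sim v\cdot\nabla_x\phi+\partial_t\phi$, so that $\mc{T}^*\psi$ produces $\Lap\phi$ after integrating the Gaussian in $v$) and suitably modified for ZZ and BPS so that the adjoint of the bounce operator $(v\cdot F_k)_+(\B_k-\id)$ acts correctly on $\psi$. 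Integration by parts then yields
\begin{equation*}
\|h\|^2=(f,\mc{T}^*\psi)_{\lambda\times\stationary}+\mathrm{(error)}=(\mc{T}f,\psi)_{\lambda\times\stationary}+\mathrm{(error)},
\end{equation*}
where the first pairing is bounded by $\|\mc{T}f\|\cdot\|\psi\|$, and $\|\psi\|$ inherits the elliptic estimate above. The error terms arise from commuting $\mc{T}^*$ past the velocity-polynomial structure of $\psi$: they produce lower-order pieces of the form $v\otimes v:\nabla_x^2\phi$, $F_0\cdot\nabla_x\phi$, and $(v\cdot F_k)_+$-weighted contributions, which after projecting $\Pi_v$ and using the bound $\|\nabla^2 U\|\lesssim M(1+|\nabla U|)$ together with Assumption~\ref{assump:poincare} can be absorbed into $C_{\J}\|f-\Pi_v f\|_{L^2(\lambda\times\stationary)}$. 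The specific algebra of these error terms — in particular the combinatorics of $\sum_{k=1}^d(v\cdot F_k)_+(\B_k-\id)$ for ZZ versus the single bounce of BPS — determines whether the dimensional factor $\sqrt d$ appears in $C_{\J}$, explaining the three different expressions in \eqref{eqn:magcj}.

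The main obstacle will be the construction of $\psi$ for BPS and ZZ so that the adjoint of the jump part interacts productively with the velocity-polynomial lift of $\phi$, and the careful control of the resulting boundary/interface terms produced by $(v\cdot F_k)_+$ (which are only piecewise smooth in $v$). This is where Lemma~\ref{lem:truetestfn} does the heavy lifting, and where the $R,R_{ZZ},\sqrt d$ factors in $C_{\J}$ are born through the use of Assumption~\ref{assump:hessian} to exchange $\nabla^2 U$ against $1+|\nabla U|$, which in turn is controlled by the Poincar\'e inequality on $\mu_U$.
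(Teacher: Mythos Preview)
Your overall strategy matches the paper's: split off $\|f-\Pi_v f\|$, then control $\|\Pi_v f-(\Pi_v f)_{\lambda\times\mu_U}\|$ by duality with a test function built from an elliptic problem on $I\times\RR^d$, lifting to a velocity polynomial, and integrating by parts against the transport-plus-bounce operator. However, two of your technical descriptions are off in ways that would break the argument as written.

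First, the boundary conditions. You propose to solve the scalar problem $\Lap\phi=h$ with Neumann data $\partial_t\phi|_{t=0,T}=0$, and then set $\psi\sim\partial_t\phi+v\cdot\nabla_x\phi$. With this choice, the component $\phi_0:=-\partial_t\phi$ vanishes at the endpoints, but the spatial components $\phi_k:=\partial_{x_k}\phi$ do \emph{not}. When you integrate by parts in $t$ the term $\int f\,v\cdot\partial_t\phi$, the boundary contribution $\bigl[f\,v\cdot\phi\bigr]_0^T$ survives and you have no way to bound it. The paper's Lemma~\ref{lem:truetestfn} is precisely engineered to produce a vector $\wb\phi=(\phi_0,\ldots,\phi_d)$ with Dirichlet data $\wb\phi(0,\cdot)=\wb\phi(T,\cdot)=0$ on \emph{all} components; this requires the spectral decomposition (Assumption~\ref{assump:spectral}) and a separate treatment of the harmonic part of $\Pi_v f$, not just a scalar Neumann solve.

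Second, the mechanism for the error terms and the role of $C_{\J}$. The test function $\psi=\phi_0-v\cdot\phi$ is identical for RHMC, ZZ, and BPS; it is \emph{not} modified per PDMP. What is PDMP-specific is the function
\[
\J:=\A^*\psi=-\partial_t\phi_0+v\cdot\nabla_x\phi_0+v\cdot\partial_t\phi-\sum_i v_i v\cdot\partial_{x_i}\phi+F_0\cdot\phi-2\sum_{k=1}^K(-v\cdot F_k)_+(v\cdot n_k)(\phi\cdot n_k),
\]
and the paper's key identity is
\[
\|\Pi_v f\|^2=\int \Pi_v f\,\J=\int f\,\J-\int(f-\Pi_v f)\,\J.
\]
The first integral becomes $\int\A f\,\psi$ after integration by parts (this is where the Dirichlet BCs are used), and the second is bounded by $\|\J\|\,\|f-\Pi_v f\|$. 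The entire content of $C_{\J}$ is the estimate $\|\J\|_{L^2(\lambda\times\stationary)}\le C_{\J}\|\Pi_v f\|$ (Lemma~\ref{lem:cj}), proved by explicitly expanding $\|\J\|^2$, integrating out the Gaussian $v$-moments, and invoking the separate estimates \eqref{eqn:estphi} and \eqref{eqn:estdiv} on $\wb\phi$ and its gradient. The $R$, $R_{ZZ}$, and $\sqrt d$ factors enter here through the $(v\cdot F_k)_+$ terms in $\J$ combined with the gradient bound \eqref{eqn:estdiv}, not through ``commuting $\mc T^*$ past the velocity structure'' as you describe. Your stated elliptic estimate also misses that the gradient of $\wb\phi$ carries the constant $1+\tfrac{1}{\sqrt m T}+\tfrac{R}{\sqrt m}+RT$, not $\tfrac{1}{\sqrt m}+T$.
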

\begin{remark}\label{rmk:extreg}
  We remark that Theorem \ref{thm:bcpoincare} also applies to any $f(t,x,v)$ which is the solution to \eqref{eqn:pdmpeqn} with initial condition $f_0\in L^2(\stationary)$. Since $\opL$ generates a contraction semigroup, for $f(t,\cdot,\cdot)=P_t f_0$ we have $P_t f_0 \in L^2(\stationary)$, and therefore for any fixed t, \begin{equation*}
      \partial_t f - v\cdot\nabla_x f-
    \sum_{k=1}^K (v\cdot F_k)_+(\B_k-I)f+F_0\cdot \nabla_v f = \gamma (\Pi_v f-f) \in L^2(\stationary).
  \end{equation*}
  Hence the right-hand side of \eqref{eqn:hypopoincare} is finite for any $f$ being the solution to \eqref{eqn:pdmpeqn}, and therefore Theorem \ref{thm:bcpoincare} holds for $f$ by density argument.
\end{remark}

To prove Theorem \ref{thm:bcpoincare}, we need the following lemma,
established in our previous work \cite{cao2019explicit}, which
provides crucial test functions that satisfy a divergence equation
with Dirichlet boundary conditions. 
\begin{lemma}[\cite{cao2019explicit}*{Lemma 2.6}]\label{lem:truetestfn}
  For any function $f=f(t,x)\in L^2(\lambda\times \mu_U)$ with
  $(f)_{\lambda\times \mu_U}=0$, there exist
  $\wb{\phi}=(\phi_0,\phi_1,\cdots,\phi_d)^{\top} \in
  H^1(\lambda\times \mu_U)^{d+1}$  solving
  \begin{equation}\label{eqn:divergence}
    -\partial_t\phi_0+\sum_{i=1}^d \partial_{x_i}^*\phi_i = f, \ \ \phi(t=0,\cdot)=\phi(t=T,\cdot)=0,
  \end{equation}
  with estimates
  \begin{align}\label{eqn:estphi}
    \|\wb{\phi}\|_{L^2(\lambda\times \mu_U)} & \le C \max\bigl\{\dfrac{1}{\sqrt{m}},T\bigr\}\| f\|_{L^2(\lambda\times \mu_U)} \\
  \label{eqn:estdiv}
    \Bigl( \sum_{i,j = 0}^d \lVert\partial_{x_i} \phi_j \rVert^2_{L^2(\lambda\times \mu_U)}  \Bigr)^{1/2} & \le C \bigl(1+\dfrac{1}{\sqrt{m}T}+\dfrac{R}{\sqrt{m}}+RT\bigr) \| f\|_{L^2(\lambda\times \mu_U)}.
  \end{align}
  Here $C$ is some universal constant, and $R$ is the ``convexity barrier'' parameter for potential $U$ defined in Theorem~\ref{thm:expconvfull}. 
\end{lemma}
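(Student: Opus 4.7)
The plan is to produce $\wb{\phi}$ from a single scalar potential $u = u(t,x)$ that solves a Poisson-type equation on the time-augmented state space. Concretely, I set $\phi_0 := \partial_t u$ and $\phi_i := \partial_{x_i} u$ for $i = 1, \ldots, d$, so that by construction $-\partial_t \phi_0 + \sum_{i=1}^d \partial_{x_i}^* \phi_i = -\partial_{tt} u + \nabla_x^* \nabla_x u = \Lap u$. Hence it suffices to find $u$ solving $\Lap u = f$ on $I \times \RR^d$ together with the Neumann condition $\partial_t u(t=0,\cdot) = \partial_t u(t=T,\cdot) = 0$, which is exactly the Dirichlet condition $\phi_0|_{t=0,T} = 0$ required by \eqref{eqn:divergence}.

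Existence of $u$ and the $L^2$ estimate \eqref{eqn:estphi} come from spectral decomposition. Under Assumption \ref{assump:spectral}, $\nabla_x^* \nabla_x$ on $L^2(\mu_U)$ has compact resolvent, with an orthonormal eigenbasis $\{e_j\}_{j \ge 0}$ and eigenvalues $0 = \lambda_0 < \lambda_1 \le \lambda_2 \le \cdots$, where $\lambda_1 \ge m$ by Assumption \ref{assump:poincare}. The Neumann realization of $-\partial_{tt}$ on $L^2(I)$ has the basis $\{\cos(k\pi t/T)\}_{k \ge 0}$ with eigenvalues $k^2\pi^2/T^2$. Since $(f)_{\lambda\times \mu_U} = 0$ eliminates the $(k,j) = (0,0)$ mode, one defines $u$ by dividing the coefficients $c_{kj}$ of $f$ by $k^2\pi^2/T^2 + \lambda_j$ on the remaining modes. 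Parseval then gives $\|\partial_t u\|^2 + \|\nabla_x u\|^2 = \sum_{(k,j)\neq(0,0)} |c_{kj}|^2/(k^2\pi^2/T^2 + \lambda_j) \le 2\max(T^2/\pi^2,\, 1/m)\|f\|^2$, yielding \eqref{eqn:estphi}.

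For \eqref{eqn:estdiv}, the sum to be controlled is exactly $\|\partial_{tt} u\|^2 + 2\|\partial_t \nabla_x u\|^2 + \|\nabla_x^2 u\|_{HS}^2$. Expanding $\|f\|^2 = \|{-}\partial_{tt}u + \nabla_x^*\nabla_x u\|^2$ and integrating by parts in $t$, where the Neumann BC on $u$ kills all boundary contributions and the cross term rearranges via $(\partial_{tt}u, \nabla_x^*\nabla_x u) = -\|\partial_t \nabla_x u\|^2$, produces the clean identity $\|f\|^2 = \|\partial_{tt} u\|^2 + 2\|\partial_t \nabla_x u\|^2 + \|\nabla_x^* \nabla_x u\|^2$. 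This pins down the first two terms directly. To convert the third term into control of $\|\nabla_x^2 u\|_{HS}$, I use the Bochner-type identity $\|\nabla_x^*\nabla_x u\|_{L^2(\mu_U)}^2 = \|\nabla_x^2 u\|_{HS}^2 + \int (\nabla_x u)^\top (\nabla_x^2 U)(\nabla_x u)\ud\mu_U$, obtained by commuting $\partial_{x_i}$ through $\nabla_x^*\nabla_x$ and summing over $i$, which introduces a curvature integral as the only correction.

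The main obstacle is controlling that curvature integral, which is precisely where the convexity-barrier parameter $R$ in \eqref{eqn:convbar} enters. When $U$ is convex the integral is nonnegative and disappears, giving $R = 0$. Under $\nabla_x^2 U \ge -L\id$ it is bounded by $L\|\nabla_x u\|^2$, and combined with the $L^2$ bound on $\nabla_x u$ from the previous step this contributes $R\max(T, 1/\sqrt{m})\|f\|$ with $R = \sqrt{L}$. Under only Assumption \ref{assump:hessian}, where $\|\nabla_x^2 U\| \le M(1 + |\nabla_x U|)$, the curvature term must be handled by a weighted integration by parts that rewrites factors of $\partial_{x_i} U$ via the adjoint identity $\partial_{x_i}^*(1) = \partial_{x_i} U$, absorbing the extra $|\nabla_x U|$ weight against $\nabla_x^*\nabla_x u$ through a Cauchy-Schwarz-and-bootstrap argument; tracking the dimensional factor through this step produces $R = M\sqrt{d}$. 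Collecting all terms, taking square roots, and carefully keeping track of the low-frequency contributions in the spectral sum (which is where the residual $1/(\sqrt{m}T)$ prefactor originates) yields \eqref{eqn:estdiv}.
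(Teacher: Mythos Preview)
Your construction has a genuine gap in the boundary conditions. You read \eqref{eqn:divergence} as requiring only $\phi_0|_{t=0,T}=0$, and your Neumann problem for $u$ is tailored to exactly that. But the lemma requires the \emph{entire} vector $\wb{\phi}$ to vanish at $t=0$ and $t=T$; this is used explicitly in the proof of Theorem~\ref{thm:bcpoincare}, where both $\int f(-\partial_t\phi_0)$ and $\int f\,v\cdot\partial_t\phi$ are integrated by parts in $t$ with no boundary contribution. With your choice $\phi_i=\partial_{x_i}u$ and Neumann data $\partial_t u|_{t=0,T}=0$, there is no reason for $\partial_{x_i}u(0,\cdot)$ or $\partial_{x_i}u(T,\cdot)$ to vanish, so the construction does not solve \eqref{eqn:divergence} as stated.

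The fix used in \cite{cao2019explicit} (and recalled in Remark~\ref{rmk:tensor} of the present paper) is precisely to split $f=f^{\perp}+c_0(t-\tfrac{T}{2})+\sum_\alpha(c_\alpha^+e^{\alpha t}+c_\alpha^-e^{\alpha(T-t)})w_\alpha$, where $f^{\perp}$ is $L^2$-orthogonal to all $\Lap$-harmonic functions. For $f^{\perp}$ your Neumann solution \emph{does} work: orthogonality to $e^{\pm\alpha t}w_\alpha$ forces each mode $u_\alpha$ of $u$ to satisfy $u_\alpha(0)=u_\alpha(T)=0$, whence $\partial_{x_i}u|_{t=0,T}=0$ as well. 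For the harmonic pieces one instead takes $\phi_k=\psi(t)\,\partial_{x_k}w_\alpha$ with an explicit $\psi$ vanishing at the endpoints (and an analogous $\phi_0$ for the affine-in-$t$ part). It is the estimation of these harmonic pieces---not any ``low-frequency contribution'' in the Neumann spectral sum---that produces the $\tfrac{1}{\sqrt{m}T}$ term in \eqref{eqn:estdiv}; your clean identity $\|f\|^2=\|\partial_{tt}u\|^2+2\|\partial_t\nabla_x u\|^2+\|\nabla_x^*\nabla_x u\|^2$ on the Neumann side would by itself only yield a constant of size $1+\tfrac{R}{\sqrt{m}}+RT$. So the decomposition is not optional: it is what simultaneously delivers the full Dirichlet boundary condition on $\wb{\phi}$ and the correct prefactor in \eqref{eqn:estdiv}.
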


Before proceeding to the proof of Theorem \ref{thm:bcpoincare}, we
present two elementary but useful lemmas: one regarding the properties
of reflections $\B_k$, and the other on intergrating the $v$ variable
with $(v\cdot n)_+$.

\begin{lemma}\label{lem:bk}
  The operators $\B_k$ defined in \eqref{eqn:bk} satisfy the following
  properties:
  \begin{enumerate}
  \item for any functions $f,g$, \[\B_k (fg)=\B_k f\B_k g;\]
  \item $\B_k^2=\id$;
  \item $\B_k$ is symmetric in $L^2(\kappa)$: For any two
    functions
    $f,g$, \begin{equation} \label{eqn:bksym}\int_{\RR^d} \B_k f g
      \ud \kappa(v)= \int_{\RR^d} f\B_k g \ud
      \kappa(v);\end{equation} as a direct consequence, letting
    $g=1$, we have for any function $f$,
    \[\int_{\RR^d} \B_k f \ud \kappa(v)= \int_{\RR^d} f\ud
      \kappa(v);\]
   \item for any function $f$, \begin{equation}\label{eqn:bkvdfkp}
       \int_{\RR^d} (v\cdot F_k)_+ \B_k f \ud \kappa(v)=  \int_{\RR^d} (-v\cdot F_k)_+  f \ud \kappa(v).
   \end{equation}   
  \end{enumerate}
\end{lemma}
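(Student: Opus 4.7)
My plan is to reduce everything to the elementary properties of the pointwise reflection map $R_k \colon v \mapsto v - 2(v\cdot n_k(x))n_k(x)$, so that $\B_k f(t,x,v) = f(t,x,R_k v)$. The key geometric observations are that $R_k$ is an orthogonal involution (in particular $R_k^2 = \id$, $|R_k v| = |v|$, and $|\det R_k| = 1$), and that $R_k v \cdot n_k = -v\cdot n_k$, so $R_k v \cdot F_k = -v\cdot F_k$ whenever $F_k(x)\neq 0$. The degenerate case $F_k(x) = 0$ makes $n_k = 0$ and $\B_k = \id$, in which all four identities hold trivially.

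Given this reduction, parts (1) and (2) are immediate from the pointwise definition: $\B_k(fg)(v) = (fg)(R_k v) = f(R_k v)\,g(R_k v) = \B_k f(v)\cdot \B_k g(v)$, and $\B_k^2 f(v) = f(R_k^2 v) = f(v)$. For part (3), I would perform the change of variables $w = R_k v$ inside $\int_{\RR^d} \B_k f\cdot g\,\ud \kappa(v)$. Because $R_k$ is orthogonal the Gaussian density $(2\pi)^{-d/2} e^{-|v|^2/2}$ is invariant and the Jacobian has absolute value one, so $\int f(R_k v) g(v)\,\ud\kappa(v) = \int f(w) g(R_k w)\,\ud\kappa(w)$, which is the claimed symmetry; setting $g\equiv 1$ recovers the mass identity.

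For part (4), I would combine the same change of variables with the sign identity: $(v\cdot F_k)_+ \B_k f(v) = (v\cdot F_k)_+\, f(R_k v)$, and substituting $w = R_k v$ together with the invariance of $\ud\kappa$ yields $(R_k w\cdot F_k)_+ f(w) = (-w\cdot F_k)_+ f(w)$; integrating gives \eqref{eqn:bkvdfkp}. Since the entire lemma is a change-of-variables argument I do not expect any analytic obstacle; the only points requiring a bit of care are the orthogonal invariance of the Gaussian and the handling of the degenerate set $\{x \colon F_k(x) = 0\}$, and both are routine.
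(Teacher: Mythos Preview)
Your proposal is correct and follows essentially the same approach as the paper: both argue parts (1)--(2) directly from the pointwise definition, and both obtain (3) and (4) via the change of variables $w = v - 2(v\cdot n_k)n_k$, using that this map is an involution leaving $\kappa$ invariant and sending $v\cdot F_k$ to $-v\cdot F_k$. Your explicit treatment of the degenerate case $F_k(x)=0$ is a nice touch the paper omits.
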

\begin{proof}
  The first and second properties can be verified directly using
  definition \eqref{eqn:bk}. The third property follows from a change
  of variable $\tilde{v}:=v-2(v\cdot n_k)n_k$ in $v$, so that $v=\tilde{v}-2(\tilde{v}\cdot n_k)n_k$, and and $\kappa(\tilde{v})=\kappa(v)$:
  \begin{align*}
    \int_{\RR^d} \B_k f g \ud \kappa(v) & =\int_{\RR^d} f(v-2(v\cdot n_k)n_k)g(v)\ud \kappa(v) \\ & = \int_{\RR^d} f(\tilde{v})g(\tilde{v}-2(\tilde{v}\cdot n_k)n_k) \ud \kappa(\tilde{v}) = \int_{\RR^d} f\B_k  g \ud \kappa(v). 
  \end{align*}
  Finally for the fourth property, we change the variables in the same way as the proof of the third one, so that $v\cdot F_k=-\tilde{v}\cdot F_k$:  \begin{align*}
       \int_{\RR^d} (v\cdot F_k)_+ \B_k f \ud \kappa(v) &=  \int_{v\cdot F_k\ge 0} (v\cdot F_k) f(v-2(v\cdot n_k)n_k) \ud \kappa(v) \\ & = \int_{\tilde{v}\cdot F_k\le 0} -(\tilde{v}\cdot F_k) f(\tilde{v}) \ud \kappa(\tilde{v}) \\ & =  \int_{\RR^d} (-v\cdot F_k)_+  f \ud \kappa(v) . \qedhere
  \end{align*}
\end{proof}

\begin{lemma}\label{lem:vinteg}
  For any vector 
  $q\in \RR^d$ and any two functions $\varphi(v\cdot q)$ and
  $\psi(v)$ such that $\varphi(v\cdot q)\psi(v)$ is even in $v$ and $\varphi(0)=0$, it holds
   \begin{equation}\label{eqn:vinteg}
    \int_{\RR^d} \varphi((v\cdot q)_+)\psi(v)\ud\kappa(v)=\dfrac{1}{2} \int_{\RR^d} \varphi(v\cdot q)\psi(v)\ud\kappa(v).
\end{equation}  
\end{lemma}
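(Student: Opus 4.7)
The plan is to split the left-hand side according to the sign of $v\cdot q$ and then exploit the evenness assumption together with the $v\to -v$ symmetry of the Gaussian measure $\kappa$.

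First I would observe that by the definition of $(\cdot)_+$, we have $\varphi((v\cdot q)_+) = \varphi(v\cdot q)$ on the halfspace $\{v\cdot q\ge 0\}$, while on $\{v\cdot q<0\}$ we have $(v\cdot q)_+ = 0$ and hence $\varphi((v\cdot q)_+) = \varphi(0) = 0$ by hypothesis. Therefore
\begin{equation*}
\int_{\RR^d} \varphi((v\cdot q)_+)\psi(v)\ud\kappa(v) = \int_{\{v\cdot q \ge 0\}} \varphi(v\cdot q)\psi(v)\ud\kappa(v).
\end{equation*}

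Next I would perform the change of variables $\tilde v = -v$ in the integral over $\{v\cdot q\le 0\}$. Since $\kappa$ is the standard Gaussian, $\ud\kappa(-v) = \ud\kappa(v)$, and the set $\{v\cdot q\le 0\}$ maps onto $\{\tilde v\cdot q\ge 0\}$. Using the assumption that $\varphi(v\cdot q)\psi(v)$ is even in $v$, this yields
\begin{equation*}
\int_{\{v\cdot q\le 0\}} \varphi(v\cdot q)\psi(v)\ud\kappa(v) = \int_{\{v\cdot q\ge 0\}} \varphi(v\cdot q)\psi(v)\ud\kappa(v).
\end{equation*}
Adding the two halfspace integrals reconstructs the full-space integral, noting that the overlap $\{v\cdot q = 0\}$ contributes nothing because $\varphi(0)=0$; rearranging yields exactly the factor of $\tfrac12$ in \eqref{eqn:vinteg}.

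There is no serious obstacle here; the only thing to be careful about is the treatment of the boundary hyperplane $\{v\cdot q=0\}$, which is handled cleanly by the $\varphi(0)=0$ assumption, and the verification that the evenness hypothesis together with the symmetry of $\kappa$ really does identify the two halfspace integrals.
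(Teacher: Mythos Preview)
Your proof is correct and follows essentially the same approach as the paper: split by the sign of $v\cdot q$, use $\varphi(0)=0$ to kill the contribution from the negative halfspace on the left-hand side, and use the change of variables $v\mapsto -v$ together with the evenness hypothesis and the symmetry of $\kappa$ to identify the two halfspace integrals. The paper starts from the right-hand side and works toward the left, but the ingredients and logic are identical.
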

\begin{proof}
The identity is obtained as follows, in which we use a change of variables $v\mapsto -v$ in the second equality, the symmetry of Gaussian $\kappa(v)$ in the sense that $\kappa(v)=\kappa(-v)$ in the third equality, and $\varphi(0)=0$ in the last equality:
\begin{align*}
  \int_{\RR^d} \varphi(v\cdot q)\psi(v)\ud\kappa(v) & = \int_{v\cdot q\ge 0} \varphi(v\cdot q)\psi(v)\ud\kappa(v)+\int_{v\cdot q \le 0} \varphi(v\cdot q)\psi(v)\ud\kappa(v) \\ & = \int_{v\cdot q\ge 0} \varphi(v\cdot q)\psi(v)\ud\kappa(v)+\int_{v\cdot q \ge 0} \varphi(-v\cdot q)\psi(-v)\ud\kappa(-v)\\ & = 2\int_{v\cdot q\ge 0} \varphi(v\cdot q)\psi(v)\ud\kappa(v)  = 2\int_{\RR^d} \varphi\bigl((v\cdot q)_+\bigr)\psi(v)\ud\kappa(v). \qedhere
\end{align*}
\end{proof}

We are now ready to prove Theorem~\ref{thm:bcpoincare}.
\begin{proof}[Proof of Theorem \ref{thm:bcpoincare}]
  Without loss of generality we assume
  \[(f)_{\lambda\times \stationary}=0.\] We now take
  $\wb{\phi} = (\phi_0, \phi_1, \cdots, \phi_d)^{\top}$ to be the test
  functions given by Lemma \ref{lem:truetestfn} with $\Pi_v f$ playing the role of $f$. 

  Define (for simplicity of notation, we denote
  $\phi = (\phi_1, \cdots, \phi_d)^{\top}$ and treat $\phi$ as a
  $d$-vector)
  \begin{equation}\label{eqn:defj}
    \J:= -\partial_t
    \phi_0+v\cdot\nabla_x\phi_0+v\cdot\partial_t
    \phi-\sum_{i=1}^d v_iv\cdot\partial_{x_i}\phi+ F_0 \cdot \phi
    -2\sum_{k=1}^K(-v\cdot F_k)_+(v\cdot n_k)(\phi\cdot n_k).
  \end{equation}
We claim the following estimate, the proof of which will be deferred:
  \begin{lemma}\label{lem:cj} The quantity $\J$ can be controlled by
    $\Pi_v f$ in the sense of
   \begin{equation}\label{eqn:cj}
     \|\J\|_{L^2(\lambda\times \stationary)} \le C_{\J} \|\Pi_v f\|_{L^2(\lambda\times \mu_U)}.
   \end{equation}
   Here $C_\J$ is the constant defined in Theorem \ref{thm:bcpoincare}.
 \end{lemma}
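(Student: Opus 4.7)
The plan is to bound $\|\J\|_{L^2(\lambda\times\stationary)}$ via the orthogonal decomposition $\J = \Pi_v\J + (\id-\Pi_v)\J$ in the velocity variable. The key observation is that the $\Pi_v$ part collapses exactly to $\Pi_v f$, thanks to the divergence equation \eqref{eqn:divergence} satisfied by the test functions, while the $(\id-\Pi_v)$ part consists of explicit $v$-fluctuations whose $L^2(\kappa)$ norms reduce by Gaussian integration to quantities controlled by \eqref{eqn:estphi}--\eqref{eqn:estdiv}.

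For the $\Pi_v$ part, the $v$-odd terms $v\cdot\nabla_x\phi_0$ and $v\cdot\partial_t\phi$ vanish; the identity $\int v_iv_j\rd\kappa=\delta_{ij}$ gives $\Pi_v[\sum_i v_iv\cdot\partial_{x_i}\phi]=\nabla_x\cdot\phi$; and a direct Gaussian integration (cf.\ Lemma~\ref{lem:vinteg}) shows that for every $k\ge 1$ and in all three PDMPs
\begin{equation*}
\Pi_v\bigl[2(-v\cdot F_k)_+(v\cdot n_k)(\phi\cdot n_k)\bigr]=-F_k\cdot\phi.
\end{equation*}
Combining with $F_0\cdot\phi$ and using $\sum_{k=0}^K F_k=\nabla_x U$, one obtains
\begin{equation*}
\Pi_v[\J]=-\partial_t\phi_0-\nabla_x\cdot\phi+\nabla_x U\cdot\phi=-\partial_t\phi_0+\nabla_x^*\phi=\Pi_v f,
\end{equation*}
so $\|\Pi_v\J\|_{L^2(\lambda\times\stationary)}=\|\Pi_v f\|_{L^2(\lambda\times\mu_U)}$.

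For the $(\id-\Pi_v)$ part I would bound each piece by the triangle inequality. The fluctuation terms $v\cdot\nabla_x\phi_0$, $v\cdot\partial_t\phi$, and $-\sum_{i,j}(v_iv_j-\delta_{ij})\partial_{x_i}\phi_j$ are handled with first and second Gaussian moments (Isserlis' theorem for the quadratic term), each bounded by a constant times $(\sum_{i,j}\|\partial_{x_i}\phi_j\|^2)^{1/2}$, and hence by a constant times the right-hand side of \eqref{eqn:estdiv}. The process-dependent work is the $(\id-\Pi_v)$-fluctuation of the bounce term. For RHMC ($K=0$) it vanishes. For ZZ, coordinate independence of the $v_k$ kills the cross-terms, reducing the $L^2(\kappa)$ norm squared pointwise in $x$ to $5\sum_k\phi_k^2(\partial_{x_k}U)^2$; the coordinate-wise integration by parts $\int(\partial_{x_k}U)^2\phi_k^2\rd\mu_U=\int\partial^2_{x_kx_k}U\,\phi_k^2\rd\mu_U+2\int\partial_{x_k}U\,\phi_k\partial_{x_k}\phi_k\rd\mu_U$ and a Young step of the form $A^2\le S+2AB\Rightarrow A^2\le 2S+4B^2$ (with $|S|$ controlled by $R_{ZZ}^2\|\phi\|^2$ via the bound on $\nabla^2U$) then closes the estimate. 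For BPS the single-direction bounce factors as $(\phi\cdot n_1)|\nabla_x U|$ times an explicit Hermite-type polynomial in $v\cdot n_1$, giving an $L^2(\kappa)$ value equal to $5(\phi\cdot\nabla_xU)^2$; rewriting the divergence equation as $\phi\cdot\nabla_xU=\Pi_v f+\partial_t\phi_0+\nabla_x\cdot\phi$ and using the crude bound $\|\nabla_x\cdot\phi\|\le\sqrt{d}(\sum_i\|\partial_{x_i}\phi_i\|^2)^{1/2}$ produces exactly the extra $\sqrt{d}$ appearing in the BPS row of \eqref{eqn:magcj}.

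The main obstacle will be the ZZ step: the cross term $2\int\partial_{x_k}U\,\phi_k\partial_{x_k}\phi_k\rd\mu_U$ produced by the integration by parts contains the very quantity being estimated, so the Young absorption must be performed carefully, and one has to check that the residual factor $R_{ZZ}(m^{-1/2}+T)$ is already absorbed into the definition of $C_\J$. For BPS, no rearrangement removes the $\sqrt{d}$ loss from the estimate on $\nabla_x\cdot\phi$: this is the structural reason that RHMC and ZZ enjoy a dimension-free $C_\J$ (through absence of bounce and diagonal coordinate structure respectively) while BPS, having only a single bounce direction, does not.
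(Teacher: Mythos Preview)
Your proof is correct and takes a cleaner, genuinely different route from the paper. The paper expands $\|\J\|_{L^2(\lambda\times\stationary)}^2$ directly, sorting the many cross terms by their power in $v$ and by whether the factor $(-v\cdot F_k)_+$ appears, applying Lemma~\ref{lem:vinteg} repeatedly, and then recognizing the combination $\lvert\partial_t\phi_0+\sum_i\partial_{x_i}\phi_i-\phi\cdot\nabla_x U\rvert^2=\lvert\Pi_v f\rvert^2$ via \eqref{eqn:divergence} at the end. Your orthogonal splitting $\J=\Pi_v\J+(\id-\Pi_v)\J$ together with the identity $\Pi_v\J=\Pi_v f$ isolates this cancellation at the outset and avoids all cross terms between the mean and fluctuation parts; the remaining $(\id-\Pi_v)\J$ is then handled by the triangle inequality, losing only universal constants. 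Both approaches ultimately rely on the same two substantive estimates---the coordinate-wise integration by parts that becomes Lemma~\ref{lem:zigzag} for ZZ (including the more delicate use of \eqref{eqn:stoltzcond9} and the auxiliary bound on $\|\phi_k\,|\nabla_x U|\|$ when only Assumption~\ref{assump:hessian} holds), and the rewriting $\phi\cdot\nabla_x U=\Pi_v f+\partial_t\phi_0+\nabla_x\cdot\phi$ with the $\sqrt{d}$ Cauchy--Schwarz loss that becomes Lemma~\ref{lem:bps} for BPS---so the decay rates you obtain are identical. Your organization makes the structural difference between the three samplers (no bounce for RHMC, diagonal bounces for ZZ, single bounce direction for BPS) more transparent; the paper's direct expansion gives more explicit numerical constants in front of each contribution.
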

 Before proceeding with the proof of Theorem \ref{thm:bcpoincare}, let
 us provide a heuristic justification for Lemma \ref{lem:cj}: if we
 calculate $\|\J\|_{L^2(\lambda\times \stationary)}^2$, then its expression
 consists of terms that are up to the fourth moment of $v$ multiplied
 with $\phi_k$, $\partial_{x_i}\phi_j$ or $\phi_k \nabla_x
 U$. Therefore, integrating out the $v$ component against Gaussian,
 and by Lemma \ref{lem:truetestfn} all terms can be controlled by
 $\|\Pi_v f\|_{L^2(\lambda\times \mu_U)}^2$. The actual constants will be
 estimated separately for each PDMP in later part of the paper.
		
 Now let us return to the proof of Theorem \ref{thm:bcpoincare} assuming
 Lemma~\ref{lem:cj}. To simplify notations, we define the operator
  \begin{equation}\label{eqn:opA}
    \A f := \partial_t f - v\cdot\nabla_x f- \sum_{k=1}^K(v\cdot  F_k)_+(\B_k-I)f + F_0 \cdot \nabla_v f.
  \end{equation}
  We now estimate the $L^2$ norm of $\Pi_v f$. Using Lemma
  \ref{lem:vinteg} for $q=-F_k$, $\varphi(v\cdot q)=v\cdot q$ and
  $\psi(v)=(v\cdot n_k)(\phi\cdot n_k)$ and integrating out $v$, we
  have \begin{equation}\label{eqn:thm2vint}\begin{aligned}
      2\int_{I\times \RR^d\times \RR^d}(-v\cdot F_k)_+(v\cdot
      n_k)(\phi\cdot n_k) \ud t \ud \stationary &=- \int_{I\times
        \RR^d\times \RR^d}(v\cdot F_k)(v\cdot n_k)(\phi\cdot n_k) \ud
      t \ud \stationary\\ & = - \int_{I\times \RR^d}\phi\cdot F_k \ud
      t \ud \mu_U(x).
\end{aligned}\end{equation} Therefore, by the construction of
  the test functions $\bar{\phi}$, and noticing $\nabla_x U=\sum_{k=0}^K F_k$, we have
    \begin{align*}
      \|\Pi_v f\|_{L^2(\lambda\times \mu_U)}^2 &=\int_{I\times\RR^d} \Pi_v
      f(-\partial_t \phi_0-\nabla_x\cdot\phi+\phi\cdot \sum_{k=0}^K
      F_k)\ud t\ud\mu_U(x)  \\ & \leftstackrel{\eqref{eqn:thm2vint}}{=}\int_{I\times\RR^d\times\RR^d} \Pi_v
      f\Bigl(-\partial_t \phi_0-\nabla_x\cdot\phi+\phi\cdot F_0\\ & \hspace{8em} -2\sum_{k=1}^K (-v\cdot F_k)_+(v\cdot n_k)(\phi\cdot
      n_k)\Bigr)\ud t\ud\stationary \\ & = \int_{I\times \RR^d\times \RR^d}
      \Pi_v f \Bigl(-\partial_t
      \phi_0+v\cdot\nabla_x\phi_0+v\cdot\partial_t
      \phi-\sum_{i}v_iv\cdot\partial_{x_i}\phi+\phi\cdot F_0 \\ &
      \hspace{8em} -2\sum_{k=1}^K (-v\cdot F_k)_+(v\cdot n_k)(\phi\cdot
      n_k)\Bigr) \ud t \ud \stationary \\ &
      \leftstackrel{\eqref{eqn:defj}}{=} \int_{I\times \RR^d\times
        \RR^d} f\J\ud t \ud \stationary -\int_{I\times \RR^d\times
        \RR^d} (f-\Pi_v f)\J\ud t \ud \stationary \\ &
      \leftstackrel{\eqref{eqn:cj}}{\le} \int_{I\times \RR^d\times
        \RR^d} f\J\ud t \ud \stationary +C_{\J}\|\Pi_v
      f\|_{L^2(\lambda\times \mu_U)}\| f-\Pi_v
      f\|_{L^2(\lambda\times \stationary)}, \stepcounter{equation}\tag{\theequation}\label{eqn:fkappaest}
    \end{align*}
  where the third equality follows from introducing a dummy $v$
  variable and noting that by the basic properties of Gaussian measure
  $\kappa$, $\int_{\RR^d} v_i\ud \kappa(v)=0$, and
  $\int_{\RR^d} v_iv_j\ud \kappa(v)=\delta_{ij}$. 
  
  To estimate the first term on the RHS of \eqref{eqn:fkappaest} we use
  integration by parts. For time derivatives, we crucially use that $\wb{\phi}$ vanish at both boundaries $t=0$ and $t=T$; for derivatives in space and velocity, we use $\int_{\RR^d} \nabla_x f \cdot g \ud \mu_U(x) = \int_{\RR^d}  f (-\nabla_x g+g\cdot \nabla_x U) \ud \mu_U(x)$ and $\int_{\RR^d} \nabla_v f \cdot g \ud \kappa(v) = \int_{\RR^d}  f (-\nabla_v g+g\cdot v) \ud \kappa(v)$:
  \begin{align*}
    & \int_{I\times \RR^d\times \RR^d} f \J \ud t \ud \stationary \\ & \qquad \leftstackrel{\eqref{eqn:defj}}{=}\int_{I\times \RR^d\times \RR^d} f\Bigl(-\partial_t
    \phi_0+v\cdot\nabla_x\phi_0+v\cdot\partial_t
    \phi-\sum_{i=1}^d v_iv\cdot\partial_{x_i}\phi+ F_0 \cdot \phi
    \\ & \qquad \qquad -2\sum_{k=1}^K(-v\cdot F_k)_+(v\cdot n_k)(\phi\cdot n_k)\Bigr) \ud t \ud \stationary\\
    & \qquad =  \int_{I\times
      \RR^d\times \RR^d} \Bigl(\partial_t f \phi_0- v\cdot\nabla_x
      f\phi_0 +f\phi_0 v\cdot \sum_{k=0}^K F_k -\partial_t f
      v\cdot\phi+(v\cdot \nabla_x f)(v\cdot \phi) \\
    & \hspace{8em} - (v\cdot \nabla_x U) (v\cdot \phi) f + f\phi\cdot F_0 -2\sum_{k=1}^K (-v\cdot F_k)_+(v\cdot n_k)(\phi\cdot n_k)f\Bigr) \ud t \ud
      \stationary \\
    & \qquad =\int_{I\times \RR^d\times \RR^d}
      \Bigl((\partial_t f - v\cdot\nabla_x f)(\phi_0-v\cdot \phi)
      +f\phi_0 v\cdot F_0 +f\phi\cdot F_0 - (v\cdot F_0) (v\cdot \phi) f \\
    & \hspace{8em} + (v\cdot \sum_{k=1}^K F_k)(\phi_0-v\cdot
      \phi)f -2\sum_{k=1}^K(-v\cdot F_k)_+(v\cdot n_k)(\phi\cdot n_k)f
      \Bigr) \ud t \ud \stationary \\
    & \qquad =\int_{I\times
      \RR^d\times \RR^d} \Bigl(\bigl(\partial_t f - v\cdot\nabla_x f+
      \sum_{k=1}^K (v\cdot F_k)_+f\bigr)(\phi_0-v\cdot \phi) +\phi_0
      F_0\cdot \nabla_v f  -(F_0\cdot\nabla_v f ) (v\cdot \phi) \\
    & \hspace{8em} - \sum_{k=1}^K(-v\cdot F_k)_+(\phi_0-v\cdot \phi)f
      -2\sum_{k=1}^K(-v\cdot F_k)_+(v\cdot n_k)(\phi\cdot n_k)f\Bigr)
      \ud t \ud \stationary \\
    & \qquad =\int_{I\times \RR^d\times
      \RR^d} \Bigl(\bigl(\partial_t f - v\cdot\nabla_x f+ \sum_{k=1}^K
      (v\cdot F_k)_+f+F_0\cdot \nabla_v f  \bigr)(\phi_0-v\cdot \phi) \\
    & \hspace{8em} -\sum_{k=1}^K(-v\cdot F_k)_+\B_k(\phi_0-v\cdot
      \phi) \B_k^2f\Bigr) \ud t \ud \stationary \\
    & \qquad\leftstackrel{\eqref{eqn:bksym},\eqref{eqn:bkvdfkp}}{=}\int_{I\times
      \RR^d\times \RR^d} \Bigl(\bigl(\partial_t f - v\cdot\nabla_x f-
      \sum_{k=1}^K (v\cdot F_k)_+(\B_k-I)f+F_0\cdot \nabla_v f
      \bigr)(\phi_0-v\cdot \phi) \Bigr) \ud t \ud \stationary \\
    & \qquad \leftstackrel{\eqref{eqn:opA}}{\le} \|\phi_0-v\cdot
      \phi\|_{L^2(\lambda\times \stationary)}\|\A f\|_{L^2(\lambda\times
      \stationary)}.
  \end{align*}
  Since $\wb{\phi}$ is independent of $v$, expanding and integrating
  out $v$ with respect to $\kappa$,  we obtain
  \begin{equation*}
    \|\phi_0-v\cdot \phi\|_{L^2(\lambda\times \stationary)}^2
    =  \|\wb{\phi}\|_{L^2(\lambda\times \mu_U)}^2
    \leftstackrel{\eqref{eqn:estphi}}{\le} C(\dfrac{1}{\sqrt{m}}+T)^2
    \|\Pi_v f\|_{L^2(\lambda\times \mu_U)}^2.
  \end{equation*}
  Thus 
  \begin{equation}\label{eqn:thnfj}
    \int_{I\times \RR^d\times  \RR^d} f \J \ud t \ud \stationary  \le C(\dfrac{1}{\sqrt{m}}+T) \|\Pi_v f\|_{L^2(\lambda\times \mu_U)}\|\A f \|_{L^2(\lambda\times \stationary)}.
  \end{equation}
  Combining \eqref{eqn:fkappaest} and \eqref{eqn:thnfj} we arrive at
  \begin{equation*}
    \|\Pi_v f\|_{L^2(\lambda\times \mu_U)} \le C(\dfrac{1}{\sqrt{m}}+T)\|\A f \|_{L^2(\lambda\times \stationary)} + C_\J \| f-\Pi_v f\|_{L^2(\lambda\times \stationary)},
  \end{equation*}
  and therefore by triangle inequality
  \begin{align*}
    \|f\|_{L^2(\lambda\times \stationary)} & \le \|f-\Pi_v
    f\|_{L^2(\lambda\times \stationary)} +\|\Pi_v f\|_{L^2(\lambda\times \mu_U)} \\
    & \le C(\dfrac{1}{\sqrt{m}}+T)\|\A f \|_{L^2(\lambda\times \stationary)}
    + (1+C_\J) \| f-\Pi_v f\|_{L^2(\lambda\times \stationary)}. \qedhere
  \end{align*}
\end{proof}

\begin{proof}[Proof of Theorem \ref{thm:expconvfull}]
  We first notice that 
  \begin{equation*}
    \int_{\mathbb{R}^d\times \mathbb{R}^d} f(t,x,v) \ud
    \stationary(x,v)=0. 
  \end{equation*}
  for all $t>0$. In view of \eqref{eqn:meanzero}, it suffices to prove
  \begin{equation*}
    \dfrac{\ud}{\ud t}\int_{\mathbb{R}^d\times \mathbb{R}^d} f(t,x,v) \ud \stationary(x,v) \leftstackrel{\eqref{eqn:pdmpeqn}}{=}\int_{\mathbb{R}^d\times \mathbb{R}^d} \opL f(t,x,v) \ud \stationary(x,v)=0.
  \end{equation*}Notice \begin{equation*}
      \opL = v\cdot \nabla_x -\nabla_x U\cdot \nabla_v + \sum_{k=1}^K \opL_k +\gamma(\Pi_v-\id),
  \end{equation*} where $     \opL_k=(v\cdot F_k)_+(\B_k-\id)+F_k\cdot\nabla_v$.
  Therefore, as it is clear that both operators $v\cdot \nabla_x -\nabla_x U\cdot \nabla_v$ and $\Pi_v-\id$ preserve $\stationary$, it suffices to show that $\opL_k$ preserves $\stationary$ as well. Indeed, \begin{align*}
      \int_{\mathbb{R}^d\times \mathbb{R}^d} \opL_k f \ud \stationary(x,v)
    & = \int_{\mathbb{R}^d\times \mathbb{R}^d}  \Bigl((v\cdot F_k)_+\B_k f-(v\cdot F_k)_+f +F_k\cdot \nabla_v f\Bigr)  \ud \stationary \\
    & \leftstackrel{\eqref{eqn:bkvdfkp}}{=} \int_{\mathbb{R}^d\times \mathbb{R}^d} \Bigl( (-v\cdot F_k)_+  f-(v\cdot F_k)_+ f+fv\cdot F_k\Bigr) \ud \stationary =0.
  \end{align*}

  Next we establish the energy decay properties of $f$. Take any two positive numbers $0<s<t$. Following
  \cite[Proposition 8]{andrieu2018hypocoercivity}, we denote the
  symmetric part of $\opL$ by
  \begin{equation}\label{eqn:symmetricl}
    \mathcal{S} = \dfrac{1}{2}\sum_{k=1}^K |v\cdot F_k|(\B_k-\id)
    +\gamma (\Pi_v-\id).
  \end{equation}
  Using the properties of $\B_k$ in Lemma \ref{lem:bk},
  \begin{align*}
    \int_{(s,t)\times\RR^d\times \RR^d} |v\cdot F_k|
    (\B_k f)^2 \ud t\ud \stationary
    & = \int_{(s,t)\times\RR^d\times \RR^d} |v\cdot F_k|
      \B_k f^2 \ud t\ud \stationary \\
    & \leftstackrel{\eqref{eqn:bksym}}{=} \int_{(s,t)\times\RR^d\times \RR^d}
      \B_k|v\cdot F_k|  f^2 \ud t\ud \stationary \\
    & = \int_{(s,t)\times\RR^d\times \RR^d} |v\cdot F_k| f^2 \ud t\ud
      \stationary.
  \end{align*}
  Therefore
  \begin{equation}\label{eqn:squareB}
    \int_{(s,t)\times\RR^d\times \RR^d}
    |v\cdot F_k| (f-\B_k f)^2 \ud t\ud \stationary = 2
    \int_{(s,t)\times\RR^d\times \RR^d} |v\cdot F_k| f(\id-\B_k )f \ud
    t\ud \stationary.
  \end{equation}
  On the other hand, since \begin{equation*}
       \int_{(s,t)\times\RR^d\times \RR^d} f\Pi_v  f \ud t\ud \stationary =
    \int_{(s,t)\times\RR^d\times \RR^d} (\Pi_v  f)^2 \ud t\ud \stationary= \int_{(s,t)\times\RR^d} (\Pi_v  f)^2 \ud t\ud \mu_U(x),
  \end{equation*}
  we have
  \begin{align*}
    \int_{(s,t)\times\RR^d\times \RR^d} (f-\Pi_v  f)^2 \ud t\ud \stationary =
    \int_{(s,t)\times\RR^d\times \RR^d} f(\id-\Pi_v  )f \ud t\ud \stationary.
  \end{align*}
  Therefore we have an elementary energy estimate, noticing the anti-symmetric part of $\opL$ does not contribute to the integral $\int_{(s,t)\times\RR^d\times \RR^d} f\opL f \ud t\ud \stationary$:
  \begin{equation}\label{eqn:energyestimate}
    \begin{aligned}
      \|f&(t, \cdot)\|_{L^2(\stationary)}^2-\|f(s,\cdot)\|_{L^2(\stationary)}^2 = 2\int_{(s,t)\times\RR^d\times
        \RR^d} f\partial_tf \ud t\ud \stationary
      \leftstackrel{\eqref{eqn:symmetricl}}{=}2\int_{(s,t)\times\RR^d\times
        \RR^d} f\mathcal{S} f \ud t\ud \stationary \\
      & = \sum_{k=1}^K \int_{(s,t)\times\RR^d\times \RR^d} |v\cdot
      F_k| f (\B_k-I) f \ud t\ud \stationary +2\gamma
      \int_{(s,t)\times\RR^d\times \RR^d} f
      (\Pi_v-I) f \ud t\ud \stationary \\
      & \leftstackrel{\eqref{eqn:squareB}}{=}
      -\dfrac{1}{2}\sum_{k=1}^K \int_{(s,t)\times\RR^d\times \RR^d}
      |v\cdot F_k| (f-\B_k f)^2 \ud t\ud \stationary -2\gamma
      \int_{(s,t)\times\RR^d\times \RR^d}
      (f-\Pi_v f)^2 \ud t\ud \stationary \\
      & \le - 2\gamma \|f-\Pi_v f\|^2_{L^2(\lambda_{(s,t)}\times \stationary)},
    \end{aligned}
  \end{equation}
 where we use $\lambda_{(s,t)}$ to denote the Lebesgue measure on $(s,t)$. In particular,
  \begin{equation}\label{eqn:nonincL2est}
    \text{ the mapping } t\mapsto \|f(t,\cdot)\|_{L^2(\stationary)}^2 \text{ is nonincreasing.}
  \end{equation}
  By equation \eqref{eqn:pdmpeqn} and definition of the operators \eqref{eqn:opL} and \eqref{eqn:opA},
  \begin{equation}\label{eqn:AfL2}
    \|\mathcal{A}f \|_{L^2(\lambda_{(s,t)}\times\stationary)} =
    \gamma\|f-\Pi_v f\|_{L^2(\lambda_{(s,t)}\times \stationary)}.
  \end{equation}
  Therefore, for any $0<s<t$ (note that Theorem \ref{thm:bcpoincare} applies by Remark \ref{rmk:extreg}), 
  \begin{align*}
    \|f(& t, \cdot)\|_{L^2(\stationary)}^2-\|f(s,\cdot)\|_{L^2(\stationary)}^2\\
        & \leftstackrel{\eqref{eqn:energyestimate}}{\le} - 2 \gamma \norm{ f - \Pi_v f}_{L^2((s, t)\times \stationary)}^2\\
        & \leftstackrel{\eqref{eqn:AfL2}}{=} -\dfrac{2\gamma}{(1+C_\J +C\gamma
          (\frac{1}{\sqrt{m}}+t-s))^2} \Bigl((1+C_\J)\norm{f-\Pi_v f}_{L^2(\lambda_{(s,t)}\times\stationary)}\\
        & \qquad\qquad  +C(\dfrac{1}{\sqrt{m}}+t-s)
          \|\mathcal{A}f \|_{L^2(\lambda_{(s,t)}\times
          \stationary)}\Bigr)^2 \\
        & \leftstackrel{\eqref{eqn:hypopoincare}}{\le}
          -\dfrac{2\gamma}{(1+C_\J+C\gamma (\frac{1}{\sqrt{m}}+t-s))^2}
          \norm{f}_{L^2(\lambda_{(s,t)}\times \stationary)}^2 \\
        & \leftstackrel{\eqref{eqn:nonincL2est}}{\le} -\dfrac{2\gamma(t-s)}{(1+C_\J
          +C\gamma (\frac{1}{\sqrt{m}}+t-s))^2}\norm{f(t, \cdot)}_{L^2( \stationary)}^2.
  \end{align*}
  Now fixing a $T > 0$ to be optimized later, for any $t>0$, we pick
  the integer $k$ satisfying $kT\le t < (k+1)T$. Applying above
  inequality iteratively and using the monoticity
  \eqref{eqn:nonincL2est}, we obtain
  \begin{align*}
    \|f(t,\cdot)\|_{L^2(\stationary)}^2
    & \le \Bigl(1+\dfrac{2\gamma T}{(1+C_\J +C\gamma
      (\frac{1}{\sqrt{m}}+T))^2}\Bigr)^{-k} \|f_0\|_{L^2(\stationary)}^2 \\
    & \le \Bigl(1+\dfrac{2\gamma T}{(1+C_\J +C\gamma
      (\frac{1}{\sqrt{m}}+T))^2}\Bigr)^{-\frac{t}{T}+1}
      \|f_0\|_{L^2(\stationary)}^2 \\
    & \le  \Bigl(1+\dfrac{2\gamma T}{(1+C_\J +C\gamma
      (\frac{1}{\sqrt{m}}+T))^2}\Bigr) \\ & \qquad \times \exp
      \Bigl(-\dfrac{t}{T}\log\bigl(1+\dfrac{2\gamma T}{(1+C_\J
      +C\gamma (\frac{1}{\sqrt{m}}+T))^2}\bigr)\Bigr)
      \|f_0\|_{L^2(\stationary)}^2.
  \end{align*}
  The prefactor
  \begin{align*}
    1+\dfrac{2\gamma T}{(1+C_\J +C\gamma
      (\frac{1}{\sqrt{m}}+T))^2} \le 1+\dfrac{2\gamma T}{(1 +C\gamma
      T)^2} \le 1 + \frac{1}{C}
  \end{align*}
  is bounded above by a universal constant. Therefore, using
  $\log(1+x) \ge \frac{1}{C}x$ for $x \in [0, 1 + \frac{1}{C}]$ for some universal constant $C$, this yields
  \eqref{eqn:expcvg} with the exponential decay rate
  \begin{equation}\label{eqn:expnu}
    \nu=\sup_{T>0}\dfrac{1}{2T}\log \Bigl(1+\dfrac{2\gamma T}{(1+C_\J +C\gamma
      (\frac{1}{\sqrt{m}}+T))^2}\Bigr)
    \ge C\sup_{T>0}\dfrac{\gamma }{(1+C_\J +\gamma
      (\frac{1}{\sqrt{m}}+T))^2}.
  \end{equation}
  Substituting \eqref{eqn:magcj} into \eqref{eqn:expnu}, we get
  \begin{equation}
    \nu \ge C \begin{cases}
     \dfrac{\gamma}{(1+\frac{1}{\sqrt{m}T}+\frac{R}{\sqrt{m}}
        +RT+\frac{\gamma}{\sqrt{m}}+\gamma T)^2}, & \text{ for RHMC;} \\
      \dfrac{\gamma}{(1+\frac{1}{\sqrt{m}T}+\frac{R_{ZZ}}{\sqrt{m}}
        +R_{ZZ}T+\frac{\gamma}{\sqrt{m}}+\gamma T)^2}, & \text{ for ZZ;} \\
      \dfrac{\gamma }{\bigl(\sqrt{d}(1+\frac{1}{\sqrt{m}T}+\frac{R}{\sqrt{m}}
        +RT) +\gamma (\frac{1}{\sqrt{m}}+T)\bigr)^2}, & \text{ for BPS.}
    \end{cases}
  \end{equation}
  We arrive at the rates \eqref{eqn:rate} by optimizing the choice of
  $T$ for each case: \begin{equation*}
      T=\begin{cases}
      m^{-\frac{1}{4}}(R+\gamma)^{-\frac{1}{2}}, & \text{ for RHMC;} \\ m^{-\frac{1}{4}}(R_{ZZ}+\gamma)^{-\frac{1}{2}}, & \text{ for ZZ;} \\ d^\frac{1}{4}m^{-\frac{1}{4}}(\sqrt{d}R+\gamma)^{-\frac{1}{2}}, & \text{ for BPS}. 
      \end{cases} \qedhere
  \end{equation*}
\end{proof}

The remaining task is to prove Lemma \ref{lem:cj}.  For
RHMC,
\[\J=-\partial_t \phi_0+v\cdot\nabla_x \phi_0+v\cdot\partial_t
  \phi-\sum_{i=1}^d v_iv\cdot \partial_{x_i}\phi +\phi\cdot\nabla_x U. \]
The norm $\|\J\|_{L^2(\lambda\times \stationary)}$ is already estimated in
\cite[Proof of Theorem 2]{cao2019explicit}, and the proof is thus
omitted here. In the two subsequent subsections we will estimate
$C_\J$ for ZZ and BPS respectively.

\subsection{The zigzag process}
In this case \[\J=-\partial_t \phi_0+v\cdot\nabla_x\phi_0+v\cdot\partial_t \phi-\sum_{i=1}^d v_iv\cdot\partial_{x_i}\phi -2\sum_{k=1}^d(-v_k\partial_{x_k}U)_+v_k\phi_k. \]
\begin{lemma}\label{lem:zigzag}
Let $\phi_i$, $i=0,\cdots,d$ be test functions as in Lemma \ref{lem:truetestfn}. Then \begin{equation}\label{eqn:zigzag}
        \sum_{k=1}^d \int_{I\times\RR^d} (\phi_k\partial_{x_k} U)^2 \ud t\ud \mu_U(x) \le C \bigl(1+\dfrac{1}{\sqrt{m}T}+\dfrac{R_{ZZ}}{\sqrt{m}}+R_{ZZ}T\bigr)^2\int_{I\times \RR^d} (\Pi_v f)^2 \ud t \ud \mu_U(x).
    \end{equation}Here $R_{ZZ}$ is defined as in Theorem \ref{thm:expconvfull}.
\end{lemma}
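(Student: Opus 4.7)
The plan is to control $B_{kk}:=\int_{I\times\RR^d}(\phi_k\partial_{x_k}U)^2\ud t\ud\mu_U$ via integration by parts in the $x_k$ variable. Using the adjoint identity $\int h\,\partial_{x_k}U\,\ud\mu_U=\int\partial_{x_k}h\,\ud\mu_U$ with $h=\phi_k^2\partial_{x_k}U$, then applying Cauchy--Schwarz and Young on the resulting cross term together with the pointwise bound $\partial_{x_kx_k}U\le\|\nabla^2 U\|$, produces the workhorse estimate
\begin{equation*}
B_{kk}\le 4\|\partial_{x_k}\phi_k\|^2_{L^2(\lambda\times\mu_U)}+2\int_{I\times\RR^d}\phi_k^2\,\|\nabla^2 U\|\,\ud t\ud\mu_U.
\end{equation*}

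In the first case, $\|\nabla^2 U\|\le L$ and $R_{ZZ}=\sqrt{L}$, so the Hessian integral is at most $L\|\phi_k\|^2_{L^2(\lambda\times\mu_U)}$. Summing over $k$ and invoking \lemref{lem:truetestfn} immediately yields the claim, since $L\|\bar\phi\|^2\lesssim R_{ZZ}^2(1/m+T^2)\|\Pi_v f\|^2_{L^2(\lambda\times\mu_U)}$ is absorbed into the right-hand side of the target bound.

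The second case, $\|\nabla^2 U\|\le M(1+|\nabla U|)$ with $R_{ZZ}=M\sqrt{d}$, requires a bootstrap. Cauchy--Schwarz gives $\int\phi_k^2|\nabla U|\le \|\phi_k\|\sqrt{S_k}$, where $S_k:=\int\phi_k^2|\nabla U|^2=\sum_jB_{jk}$. Summing the $B_{kk}$ estimate over $k$ and applying Cauchy--Schwarz once more yields
\begin{equation*}
\sum_k B_{kk}\le 4P+2MQ^2+2MQ\sqrt{T^\star},
\end{equation*}
where $P:=\sum_{i,j}\|\partial_{x_i}\phi_j\|^2$, $Q:=\|\bar\phi\|$, and $T^\star:=\sum_{j,k}B_{jk}$. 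The auxiliary $T^\star$ is then controlled by running the same IBP + Young argument for each $B_{jk}$ and summing over both $j,k$, giving $T^\star\le 8P+8d^2M^2Q^2$. Substituting back and using $\sqrt{a+b}\le\sqrt{a}+\sqrt{b}$ together with AM--GM yields $\sum_k B_{kk}\lesssim P+R_{ZZ}^2 Q^2$, at which point \lemref{lem:truetestfn} (with its parameter $R$ taken equal to $R_{ZZ}=M\sqrt{d}$) closes the argument.

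The main obstacle is the dimensional bookkeeping in the second case: summing the $B_{jk}$ recursion over both indices simultaneously produces a term of size $d^2M^2Q^2$ rather than the required $dM^2Q^2=R_{ZZ}^2Q^2$. The resolution is to bound the diagonal $\sum_k B_{kk}$ through the factor $2MQ\sqrt{T^\star}$, in which only a single $M$ sits outside the square root. Pairing this single $M$ with $\sqrt{T^\star}\sim dMQ$ reproduces the correct $R_{ZZ}^2=dM^2$ scaling without introducing a spurious extra factor of $d$.
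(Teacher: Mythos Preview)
Your argument is correct and follows essentially the same route as the paper: the same integration-by-parts identity $\int(\phi_k\partial_{x_k}U)^2\,\ud\mu_U=\int\partial_{x_k}(\phi_k^2\partial_{x_k}U)\,\ud\mu_U$, the same Young-inequality absorption of the cross term, and the same case split on the Hessian bound. The one cosmetic difference is in the general case $R_{ZZ}=M\sqrt{d}$: the paper imports the estimate $\|\phi_k|\nabla_x U|\|_{L^2}\le C(\|\nabla_x\phi_k\|_{L^2}+Md\,\|\phi_k\|_{L^2})$ as a black box from \cite{cao2019explicit}*{Lemma~2.2}, whereas you re-derive the equivalent bound inline via your bootstrap on $T^\star=\sum_{j,k}B_{jk}$ (your inequality $T^\star\le 8P+8d^2M^2Q^2$ is exactly that lemma, squared and summed over $k$). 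Both yield the same $P+R_{ZZ}^2Q^2$ control and hence the same final constant.
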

\begin{proof}
  Using integration by parts,
  \begin{align*}
    \sum_{k=1}^d \int_{I\times\RR^d}
    & (\phi_k\partial_{x_k} U)^2 \ud t\ud \mu_U(x)=  \sum_{k=1}^d \int_{I\times\RR^d} \partial_{x_k}(\phi_k^2\partial_{x_k} U)  \ud t\ud \mu_U(x) \\
    & = 2\sum_{k=1}^d \int_{I\times\RR^d} \phi_k\partial_{x_k}\phi_k\partial_{x_k} U  \ud t\ud \mu_U(x)+ \sum_{k=1}^d \int_{I\times\RR^d} \phi_k^2\partial_{x_k x_k} U  \ud t\ud \mu_U(x) \\
    & \le \sum_{k=1}^d \int_{I\times\RR^d}  \bigl(\dfrac{1}{2}(\phi_k\partial_{x_k} U)^2
      + 2 |\partial_{x_k}\phi_k|^2\bigr)\ud t\ud \mu_U(x)
      + \sum_{k=1}^d \int_{I\times\RR^d} \phi_k^2\partial_{x_k x_k} U  \ud t\ud \mu_U(x).
  \end{align*}
  After rearranging, we have
  \begin{multline}\label{eqn:zzlem}
    \sum_{k=1}^d \int_{I\times\RR^d} (\phi_k\partial_{x_k} U)^2 \ud
    t\ud \mu_U(x)\le C\sum_{k=1}^d \int_{I\times\RR^d}
    (|\partial_{x_k}\phi_k|^2+\phi_k^2\partial_{x_k x_k} U ) \ud t\ud
    \mu_U(x) \\
    \leftstackrel{\eqref{eqn:estdiv}}{\le}
    C(1+\dfrac{1}{\sqrt{m}T}+\dfrac{R}{\sqrt{m}}+RT)^2 \int_{I\times
      \RR^d} (\Pi_v f)^2 \ud t \ud \mu_U(x) + C\sum_{k=1}^d
    \int_{I\times\RR^d} \phi_k^2\partial_{x_k x_k} U\ud t\ud \mu_U(x).
   \end{multline}
   We first discuss the easier case where $\norm{\nabla_x^2 U}\le L$:
   \begin{align*}
     \sum_{k=1}^d \int_{I\times\RR^d}  \phi_k^2\partial_{x_k x_k} U\ud t\ud \mu_U(x)
     & \le L \sum_{k=1}^d \int_{I\times\RR^d} \phi_k^2\ud t\ud \mu_U(x) \\
     & \leftstackrel{\eqref{eqn:estphi}}{\le}C L(\dfrac{1}{\sqrt{m}}+T)^2
       \int_{I\times \RR^d} (\Pi_v f)^2 \ud t \ud \mu_U(x).
   \end{align*}
   In the general setting where only Assumption \ref{assump:hessian} is assumed, by \cite[Lemma 2.2]{cao2019explicit}, we have \begin{equation}\label{eqn:udplm22}
       \|\phi_k|\nabla_x
       U|\|_{L^2(\lambda\times \mu_U)} \le C \bigl(\|\nabla_x \phi_k\|_{L^2(\lambda\times \mu_U)}
       + Md\|\phi_k\|_{L^2(\lambda\times \mu_U)}\bigr).
   \end{equation}Therefore, by Cauchy-Schwarz inequality,
   \begin{align*}
     \sum_{k=1}^d \int_{I\times\RR^d}
     & \phi_k^2\partial_{x_k x_k} U\ud t\ud \mu_U(x) \\
     & \leftstackrel{\eqref{eqn:stoltzcond9}}{\le} CM \sum_{k=1}^d
       \int_{I\times\RR^d} (1+|\nabla_x U|) \phi_k^2\ud t\ud \mu_U(x) \\
     & \le CM \|\phi\|_{L^2(\lambda\times \mu_U)}^2 +CM \sum_{k=1}^d
       \|\phi_k\|_{L^2(\lambda\times \mu_U)}\|\phi_k|\nabla_x
       U|\|_{L^2(\lambda\times \mu_U)} \\
     & \leftstackrel{\eqref{eqn:udplm22}}{\le} CM \Bigl(\|\phi\|_{L^2(\lambda\times \mu_U)}^2+\sum_{k=1}^d \|\phi_k\|_{L^2(\lambda\times \mu_U)}
       \bigl(\|\nabla_x \phi_k\|_{L^2(\lambda\times \mu_U)}
       + Md\|\phi_k\|_{L^2(\lambda\times \mu_U)}\bigr)\Bigr) \\
     & \leftstackrel{\eqref{eqn:estphi}}{\le}
       CM\Bigl((\sum_{k=1}^d\|\phi_k\|_{L^2(\lambda\times \mu_U)}^2)^\frac{1}{2}(\sum_{k=1}^d
       \|\nabla_x\phi_k\|_{L^2(\lambda\times \mu_U)}^2)^\frac{1}{2}+Md
       \|\phi\|_{L^2(\lambda\times \mu_U)}^2\Bigr) \\
     & \leftstackrel{\eqref{eqn:estphi},\eqref{eqn:estdiv}}{\le}
       C(1+\frac{1}{\sqrt{m}T}+\frac{M\sqrt{d}}{\sqrt{m}}
       +M\sqrt{d}T)^2\|\Pi_v f\|_{L^2(\lambda\times \mu_U)}^2.
   \end{align*}
 This proves the lemma with $R_{ZZ}=M\sqrt{d}$.
 \end{proof}

 \begin{proof}[Proof of Lemma \ref{lem:cj} for zigzag process]
   To estimate $\| \J \|_{L^2(\lambda\times \stationary)}^2$, we expand its terms, categorize them according to their powers on $v$ and whether $(-v_k\partial_{x_k}U)_+$ is contained, and integrate out the $v$ variable for each term.

   We start with terms that do not contain $(-v_k\partial_{x_k}U)_+$, in which all terms with odd power of $v$ vanish:\\
   Terms with $0$-th power of $v$:
   \begin{equation*}
     \int_{I\times \mathbb{R}^d} (\partial_t \phi_0)^2\ud t\ud\mu_U(x).
   \end{equation*}
   Terms with $2$-nd power of $v$:
    \begin{align*}
       \int_{I\times \mathbb{R}^d\times \mathbb{R}^d}
       & \Bigl((v\cdot\nabla_x \phi_0)^2+(v\cdot\partial_t \phi)^2
         +2(v\cdot\partial_t\phi)(v\cdot\nabla_x\phi_0)+2\sum_{i,j=1}^d v_iv_j\partial_t
         \phi_0\partial_{x_i}\phi_j\Bigr)\ud t\ud \stationary \\
       & = \int_{I\times\mathbb{R}^d}
         \Bigl((\nabla_x\phi_0)^2+(\partial_t\phi)^2
         +2(\partial_t\phi\cdot\nabla_x\phi_0)+2
         \partial_t
         \phi_0\sum_{i=1}^d\partial_{x_i}\phi_i\Bigr)\ud t\ud \mu_U(x) \\
       & \le \int_{I\times\mathbb{R}^d}
         \Bigl(2(\nabla_x \phi_0)^2+2(\partial_t\phi)^2
         +2         \partial_t
         \phi_0\sum_{i=1}^d\partial_{x_i}\phi_i\Bigr)\ud
         t\ud \mu_U(x).
    \end{align*}
    Terms with $4$-th power on $v$: 
    \begin{align*}
          \int_{I\times \mathbb{R}^d\times \mathbb{R}^d} &  \sum_{i,j,p,q=1}^d v_iv_jv_pv_q \partial_{x_i}\phi_j \partial_{x_p}\phi_q \ud t \ud \stationary \\ & =  \int_{I\times \mathbb{R}^d} \Bigl(3\sum_{i=1}^d (\partial_{x_i}\phi_i)^2 + \sum_{1\le i\neq j \le d}\bigl(|\partial_{x_i}\phi_j|^2  + \partial_{x_i}\phi_i\partial_{x_j}\phi_j  +\partial_{x_i}\phi_j\partial_{x_j}\phi_i\bigr) \Bigr)\ud t\ud\mu_U(x) \\ & \le  \int_{I\times \mathbb{R}^d} \Bigl((\sum_{i=1}^d\partial_{x_i}\phi_i)^2 +2 \sum_{i, j=1}^d |\partial_{x_i}\phi_j|^2   \Bigr)\ud t\ud\mu_U(x).
     \end{align*} 
     
     Now we look at the terms with ``$(-v_k\partial_{x_k}U)_+$''. \\
     Terms where $(-v_k\partial_{x_k}U)_+$ appearing twice, in which case the overall power of $v$ is even and thus Lemma~\ref{lem:vinteg} is applicable: 
     \begin{align*}
          \int_{I\times \mathbb{R}^d\times \mathbb{R}^d} & 4\sum_{k,p=1}^d(-v_k \partial_{x_k} U )_+(-v_p \partial_{x_p} U)_+v_kv_p\phi_k\phi_p \ud t\ud\stationary \\ & = \int_{I\times \mathbb{R}^d\times \mathbb{R}^d}  \Bigl(2\sum_{k=1}^d (-v_k \partial_{x_k} U )^2v_k^2\phi_k^2 + \sum_{1\le k\neq p \le d} v_k^2v_p^2\phi_k\partial_{x_k}U \phi_p \partial_{x_p}U \Bigr) \ud t\ud\stationary \\ & = \int_{I\times \mathbb{R}^d}  \Bigl(6\sum_{k=1}^d \phi_k^2 ( \partial_{x_k} U )^2+ \sum_{1\le k\neq p\le d} \phi_k\partial_{x_k}U \phi_p \partial_{x_p}U \Bigr) \ud t\ud\mu_U(x)  \\ & = \int_{I\times \mathbb{R}^d}  \Bigl((\sum_{k=1}^d \phi_k  \partial_{x_k} U )^2+ 5\sum_{k=1}^d  \phi_k^2(\partial_{x_k}U)^2 \Bigr) \ud t\ud\mu_U(x).
     \end{align*}
     Cross terms with $(-v_k\partial_{x_k} U)_+$ where we could still use Lemma \ref{lem:vinteg} due to an overall even power of $v$: \begin{align*}
         \int_{I\times \mathbb{R}^d\times \mathbb{R}^d} & \Bigl(4\partial_t\phi_0\sum_{k=1}^d (-v_k \partial_{x_k} U)_+v_k \phi_k  +4\sum_{i,j,k=1}^d v_iv_j \partial_{x_i} \phi_j(-v_k \partial_{x_k} U)_+v_k \phi_k\Bigr)\ud t\ud\stationary \\ & = \int_{I\times \mathbb{R}^d\times \mathbb{R}^d}  \Bigl(-2\partial_t\phi_0\sum_{k=1}^d v_k^2 \partial_{x_k} U \phi_k  -2\sum_{i=1}^d v_i^4 \partial_{x_i} \phi_i \partial_{x_i} U\phi_i \\ & \qquad -2\sum_{1\le i\neq j\le d} v_i^2v_j^2 \partial_{x_i}\phi_i \partial_{x_j}U \phi_j\Bigr)\ud t\ud\stationary  \\ & = \int_{I\times \mathbb{R}^d}  \Bigl(-2\partial_t\phi_0\sum_{k=1}^d  \partial_{x_k} U \phi_k  -6\sum_{i=1}^d \partial_{x_i} \phi_i \partial_{x_i} U\phi_i-2\sum_{1\le i\neq j\le d}  \partial_{x_i}\phi_i \partial_{x_j}U \phi_j\Bigr)\ud t\ud\mu_U(x)  \\ & \le \int_{I\times \mathbb{R}^d}  \Bigl(-2(\partial_t\phi_0+\sum_{i=1}^d \partial_{x_i}\phi_i)(\sum_{k=1}^d   \phi_k\partial_{x_k} U)  +2\sum_{i=1}^d (|\partial_{x_i} \phi_i|^2+ |\phi_i\partial_{x_i} U|^2)\Bigr)\ud t\ud\mu_U(x).
    \end{align*}                                                                    Finally cross terms with $(-v_k\partial_{x_k} U)_+$ where one cannot use Lemma \ref{lem:vinteg} due to an overall odd power of $v$. In this case, instead of calculating an exact integral (which we actually can, but it does not yield a better bound), for simplicity we control these terms by what we have calculated above: 
    \begin{align*}
        -4 \int_{I\times \mathbb{R}^d\times \mathbb{R}^d} & v\cdot (\partial_t \phi+\nabla_x\phi_0)\sum_{k=1}^d (-v_k\partial_{x_k}U)_+v_k\phi_k \ud t\ud\stationary \\ &  =-4 \int_{I\times \mathbb{R}^d\times \mathbb{R}^d}  \sum_{k=1}^d (\partial_t \phi_k+\partial_{x_k}\phi_0) (-v_k\partial_{x_k}U)_+v_k^2\phi_k \ud t\ud\stationary  \\ & \le  \int_{I\times \mathbb{R}^d\times \mathbb{R}^d} \Bigl( \sum_{k=1}^d v_k^4 (\partial_t \phi_k+\partial_{x_k}\phi_0)^2+4\sum_{k=1}^d (-v_k\partial_{x_k}U)_+^2\phi_k^2\Bigr) \ud t\ud\stationary  \\ & =  \int_{I\times \mathbb{R}^d\times \mathbb{R}^d} \Bigl( 3\sum_{k=1}^d  (\partial_t \phi_k+\partial_{x_k}\phi_0)^2+2\sum_{k=1}^d (v_k\partial_{x_k}U)^2\phi_k^2\Bigr) \ud t\ud\stationary \\ & \le \int_{I\times \mathbb{R}^d} \Bigl( 6\sum_{k=1}^d  \bigl((\partial_t \phi_k)^2+(\partial_{x_k}\phi_0)^2\bigr)+2\sum_{k=1}^d \phi_k^2(\partial_{x_k}U)^2\Bigr) \ud t\ud\mu_U(x).
    \end{align*} 
    Therefore, combining these calculations, we obtain finally  
    \begin{align*}
      \norm{\J}_{L^2(\lambda\times \stationary)}^2
     & \le \int_{I\times \mathbb{R}^d} \Big(|\partial_t\phi_0+\sum_{i=1}^d \partial_{x_i}\phi_i -\sum_{i=1}^d \phi_i \partial_{x_i} U|^2 +8\sum_{i,j=0}^d|\partial_{x_i}\phi_j|^2 \\ & \qquad +9\sum_{k=1}^d(\partial_{x_k} U )^2\phi_k^2 \Big)\ud t\ud\mu_U(x) \\
     & \leftstackrel{\eqref{eqn:divergence},\eqref{eqn:estdiv},\eqref{eqn:zigzag}}{\le} C(1+\dfrac{1}{\sqrt{m}T}+\dfrac{R_{ZZ}}{\sqrt{m}}+R_{ZZ}T)^2\|\Pi_v f\|_{L^2(\lambda\times \mu_U)}^2. \qedhere
   \end{align*}
 \end{proof}
 
 \begin{remark}\label{rmk:tensor}
   Our bound in Lemma \ref{lem:zigzag} can be improved for some
   specific cases. For example, if the potential has a separate form
   $U(x)=\sum_{k=1}^d U_k(x_k)$ with $U_k''(x)\ge -L$ for all $k$, we
   claim the convergence rate $\nu$ is dimension independent,
   regardless of growth condition of $U$, recovering the result in \cite{andrieu2018hypocoercivity}.
   
   For the proof of this, we need to revisit the construction of the test functions $\phi_k$ in the proof of \cite[Lemma 2.6]{cao2019explicit}, and make a more refined estimate than that in Lemma \ref{lem:zigzag}. We will follow the notations of the proof of \cite[Lemma 2.6]{cao2019explicit}. Let us decompose
   \[\Pi_v f=f^\perp +c_0(t-\frac{T}{2})+\sum_{\alpha} (c_\alpha^+
     e^{\alpha t}+c_\alpha^- e^{\alpha(T-t)})w_\alpha(x),\] where
   $c_0,c_\alpha^\pm$ are numbers, $f^\perp$ is perpendicular to all harmonic functions in
   $\lambda\times \mu_U$, in the sense that for any $g\in H^2(\lambda\times \mu_U)$, \begin{equation*}
       \Lap g=-\partial_{tt} g+\nabla_x^*\nabla_x g=0 \ \Rightarrow \  \int_{I\times\RR^d} f^\perp g\ud t\ud\mu_U(x)=0,
   \end{equation*}
and $\alpha^2,w_\alpha$ are
   corresponding eigenvalues and eigenfunctions of
   $\nabla_x^*\nabla_x$:
   \[\nabla_x^*\nabla_x w_\alpha = \alpha^2 w_\alpha, \
     \|w_\alpha\|_{L^2(\mu_U)}=1.\] By linear combination, it suffices
   to prove in both cases $\Pi_v f=f^\perp$ and
   $\Pi_v f=e^{\alpha t}w_\alpha(x)$ (note in the case
   $\Pi_v f=t-\frac{T}{2}$ the corresponding $\phi_k=0$ for $k\ge 1$,
   and thus \eqref{eqn:diag} trivially holds), the corresponding
   functions $\phi_k$ satisfy
   \begin{equation}\label{eqn:diag}
     \sum_{k=1}^d \int_{I\times\RR^d} \phi_k^2\partial_{x_k x_k} U\ud
     t\ud \mu_U(x) \le \|\Pi_v f\|^2_{L^2(\lambda\times \mu_U)}.
   \end{equation}

   First consider the case $\Pi_v f=f^\perp$, $\phi_k=\partial_{x_k} u$ where $u$
   is the solution of the elliptic equation
   \begin{equation}\label{eqn:mixedelliptic}
     \left\{ \begin{aligned} & \Lap
         u=f^\perp & \mbox{in} & \ I\times \mathbb{R}^d,\\ &
         \partial_t u(t=0, \cdot)=\partial_t u(t=T,\cdot)=0 &
         \mbox{in} & \ \mathbb{R}^d.
       \end{aligned}\right.
   \end{equation}
   By Bochner's formula, using the fact
   that $U(x)=\sum_{k=1}^d U_k(x_k)$,
   \begin{align*}
     \sum_{i,j=0}^{d}\|\partial_{x_i, x_j} u\|_{L^2(\lambda\times \mu_U)}^2 &=\|\Lap u \|_{L^2(\lambda\times \mu_U)}^2-\int_{I\times\mathbb{R}^d} \nabla_x u^\top  \nabla_x^2 U\nabla_x u \ud t\ud\mu_U(x) \\ & =\|f^\perp \|_{L^2(\lambda\times \mu_U)}^2-\sum_{k=1}^d\int_{I\times\mathbb{R}^d} (\partial_{x_k} u)^2  U_k''(x_k) \ud t\ud\mu_U(x),
   \end{align*}
   this yields \eqref{eqn:diag} since $\phi_k=\partial_{x_k} u$.
		
   For the case $\Pi_v f=e^{\alpha t} w_\alpha$ for a particular
   $\alpha$, $\phi_k=\psi(t)\partial_{x_k}w_\alpha(x)$, where
   \[\|\psi(t)\|_{L^2(I)} \le \dfrac{1}{\alpha^2}\|\Pi_v
     f\|_{L^2(\lambda\times \mu_U)}.\] Moreover, again by Bochner's formula,
   using
   $\norm {\nabla_x^*\nabla_x w_\alpha}_{L^2(\mu_U)} = \alpha^2
   \norm{w_\alpha}_{L^2(\mu_U)}=\alpha^2$,
   \begin{align*}
     \sum_{i,j=1}^{d}\|\partial_{x_i, x_j} w_\alpha\|_{L^2(I\times
       \mu_U)}^2 & =\|\nabla_x^*\nabla_x w_\alpha \|_{L^2(\lambda\times \mu_U)}^2-\int_{I\times\mathbb{R}^d}
    \nabla_x w_\alpha^\top \nabla_x^2 U\nabla_x w_\alpha \ud\mu_U(x) \\ &=\alpha^4-\sum_{k=1}^d\int_{I\times\mathbb{R}^d}
     (\partial_{x_k} w_\alpha )^2 U_k''(x_k) \ud\mu_U(x).
   \end{align*}
   Therefore
   $\sum_{k=1}^d\int_{I\times\mathbb{R}^d} (\partial_{x_k} w_\alpha
   )^2 U_k''(x_k) \ud\mu_U(x) \leq \alpha^4$ and hence
   \begin{align*}
     \sum_{k=1}^d \int_{I\times\RR^d}   \phi_k^2\partial_{x_k x_k} U\ud t\ud \mu_U(x) & = \|\psi(t)\|_{L^2(I)}^2\sum_{k=1}^d\int_{I\times\mathbb{R}^d} (\partial_{x_k} w_\alpha )^2 U_k''(x_k) \ud\mu_U(x) \\ &\le \|\Pi_v f\|_{L^2(\lambda\times \mu_U)}^2.
   \end{align*}
   The estimate \eqref{eqn:diag} follows from linear
   combination. Substituting into \eqref{eqn:zzlem} we obtain \eqref{eqn:zigzag} with
   $R_{ZZ}=R$, so that we have a dimension-independent convergence rate assuming $U_k''(x_k)\ge -L$,
   even without an upper bound on $\nabla_x^2 U$ besides Assumption \ref{assump:hessian}. Moreover, if we further assume $U_k''(x_k)\ge 0$ for all $k$, then we have convergence rate $O(\sqrt{m})$ after optimizing in $\gamma$.
\end{remark}

\subsection{Bouncy particle sampler}
In this case $K=1$, and $n_1=\frac{\nabla_x U}{|\nabla_x U|}$. In order to avoid notation conflicts, in this section, we write $\nn = n_1$ and use 
\begin{equation*}
  \nn_i =\dfrac{\partial_{x_i} U}{|\nabla_x U|}
\end{equation*}
to denote the $i$-th component of $\nn$. As $\nn$ is normalized, $\sum_{i=1}^d \nn_i^2=1$. 

Recall that we want to estimate $\J$, which for BPS is given by 
\[\J=-\partial_t \phi_0+v\cdot\nabla_x\phi_0+v\cdot\partial_t
  \phi-\sum_{i=1}^d v_iv\cdot\partial_{x_i}\phi -2(-v\cdot \nn)_+(v\cdot
  \nn)(\phi\cdot \nabla_x U). \]
\begin{lemma}\label{lem:bps}
  Let $\phi_i$, $i=0,\cdots,d$ be the test functions as in Lemma
  \ref{lem:truetestfn}. Then
  \begin{equation}\label{eqn:bps}
    \int_{I\times \mathbb{R}^d} (\phi\cdot\nabla_x U)^2 \ud t\ud\mu_U(x) \le Cd \bigl(1+\dfrac{1}{\sqrt{m}T}+\dfrac{R}{\sqrt{m}}+RT\bigr)^2 \int_{I\times \mathbb{R}^d} (\Pi_v f)^2 \ud t\ud\mu_U(x).
  \end{equation}
  Here $R$ is defined as in Theorem \ref{thm:expconvfull}.
\end{lemma}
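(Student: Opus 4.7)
The plan is to integrate by parts in $x$ on one of the two factors of $\phi\cdot\nabla_x U$, which isolates a Hessian quadratic-form term $\int\phi^\top\nabla_x^2 U\phi$, and then to control the remaining quantities via Lemma \ref{lem:truetestfn} combined with the curvature assumption on $U$.

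Starting from $\int(\phi\cdot\nabla_x U)^2\ud\mu_U = \sum_k\int\phi_k\partial_{x_k}U\cdot(\phi\cdot\nabla_x U)\ud\mu_U$ and using the $L^2(\mu_U)$ adjoint identity $\int h\partial_{x_k}U\ud\mu_U = \int\partial_{x_k}h\ud\mu_U$, one obtains
\[
\int(\phi\cdot\nabla_x U)^2\ud\mu_U = \int(\nabla_x\cdot\phi)(\phi\cdot\nabla_x U)\ud\mu_U + \sum_{j,k}\int\phi_k\partial_{x_k}\phi_j\partial_{x_j}U\ud\mu_U + \int\phi^\top\nabla_x^2 U\phi\ud\mu_U.
\]
A second IBP in $x_j$ on the middle sum, Cauchy-Schwarz to absorb $\tfrac{1}{2}\int(\phi\cdot\nabla_x U)^2$ on the left, and the crude estimate $\|\nabla_x\cdot\phi\|^2\le d\sum_{j,k}\|\partial_{x_j}\phi_k\|^2$ combine to give
\[
\int(\phi\cdot\nabla_x U)^2\ud t\ud\mu_U \le 2(d+1)\sum_{i,j=0}^d\|\partial_{x_i}\phi_j\|_{L^2(\lambda\times\mu_U)}^2 + 2\int\phi^\top\nabla_x^2 U\phi\ud t\ud\mu_U.
\]
By \eqref{eqn:estdiv}, the first term is already bounded by $Cd(1+\tfrac{1}{\sqrt{m}T}+\tfrac{R}{\sqrt{m}}+RT)^2\|\Pi_v f\|^2_{L^2(\lambda\times\mu_U)}$, which matches the target.

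The main obstacle is to bound the Hessian form $\int\phi^\top\nabla_x^2 U\phi$ by the same expression $Cd(1+\tfrac{1}{\sqrt{m}T}+\tfrac{R}{\sqrt{m}}+RT)^2\|\Pi_v f\|^2$ in each of the three regimes of \eqref{eqn:convbar}. In the general regime (only Assumption \ref{assump:hessian}, where $R=M\sqrt{d}$), I would apply the pointwise bound $\|\nabla_x^2 U\|\le M(1+|\nabla_x U|)$, then Cauchy-Schwarz and \eqref{eqn:udplm22} to bound $\int|\nabla_x U||\phi|^2\ud\mu_U$; the arithmetic identity $M^2d=R^2$ then absorbs the extra dimensional factor into $d(\tfrac{R}{\sqrt{m}}+RT)^2$. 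In the convex or semi-convex regimes there is no pointwise upper bound on $\|\nabla_x^2 U\|$ to work with, so I would follow Remark \ref{rmk:tensor} and exploit the explicit construction of $\phi$ from the proof of Lemma \ref{lem:truetestfn}: each nonzero-frequency component of $\phi$ has the form $\partial_{x_k}u$ or $\psi(t)\partial_{x_k}w_\alpha$, to which Bochner's identity $\sum_{i,j}\|\partial_{x_ix_j}u\|^2 + \int\nabla_x u^\top\nabla_x^2 U\nabla_x u\ud\mu_U = \|\Lap u\|^2$ applies. Since $\sum\|\partial_{x_ix_j}u\|^2\ge 0$, this directly yields $\int\phi^\top\nabla_x^2 U\phi\le\|\Pi_v f\|^2$ in the convex case; in the semi-convex case, the lower curvature bound $\nabla_x^2 U\ge -L\id$ contributes at worst a $L\|\phi\|^2\sim(\tfrac{R}{\sqrt{m}}+RT)^2\|\Pi_v f\|^2$ correction, still within the target.
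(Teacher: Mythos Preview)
Your approach is correct but takes a substantially longer route than the paper's. The paper exploits a one-line algebraic identity that you overlooked: the divergence equation \eqref{eqn:divergence} reads $-\partial_t\phi_0+\sum_i\partial_{x_i}^*\phi_i=\Pi_v f$, and since $\partial_{x_i}^*\phi_i=-\partial_{x_i}\phi_i+\phi_i\partial_{x_i}U$, this immediately gives
\[
\phi\cdot\nabla_x U=\Pi_v f+\partial_t\phi_0+\sum_{i=1}^d\partial_{x_i}\phi_i.
\]
Cauchy--Schwarz on this sum of $d+2$ terms and the derivative bound \eqref{eqn:estdiv} then yield \eqref{eqn:bps} in two lines, with no Hessian term and no case analysis on the curvature of $U$.

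By contrast, your integration-by-parts route produces the quadratic form $\int\phi^\top\nabla_x^2 U\,\phi$, which you must then bound from above. In the convex and semi-convex regimes there is no pointwise upper bound on $\nabla_x^2 U$, so you are forced to open the black box of Lemma~\ref{lem:truetestfn} and use the gradient structure $\phi=\nabla_x\tilde u$ together with Bochner's identity, as in Remark~\ref{rmk:tensor}. That argument does go through (Bochner gives $\int\phi^\top\nabla_x^2 U\,\phi\le\|\nabla_x^*\nabla_x\tilde u\|^2=\|\Pi_v f+\partial_t\phi_0\|^2$, controlled by \eqref{eqn:estdiv}), but it is dimension-free on the Hessian term, so the factor $d$ in \eqref{eqn:bps} still comes solely from your estimate $\|\nabla_x\cdot\phi\|^2\le d\sum_{j,k}\|\partial_{x_j}\phi_k\|^2$ --- which is morally the same Cauchy--Schwarz step the paper uses directly on the identity above. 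In short, the paper's proof is a strict shortcut of yours: it uses only the statement of Lemma~\ref{lem:truetestfn}, while yours additionally requires its internal construction.
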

{Let us remark that the factor $d$ on the right-hand side of the above estimate is the reason that the convergence rate we obtain is degraded by a factor of $\sqrt{d}$ for BPS.
\begin{proof}
  By construction of the test functions \eqref{eqn:divergence}, we have
  \[\phi\cdot \nabla_x U =\Pi_v f+\partial_t \phi_0 + \sum_{i=1}^d
    \partial_{x_i}\phi_i.\]
  Thus 
  \begin{align*}
    \int_{I\times \mathbb{R}^d} (\phi\cdot\nabla_x U)^2
    \ud t\ud\mu_U(x)
    & = \int_{I\times \mathbb{R}^d} (\Pi_v
    f+\partial_t \phi_0 + \sum_{i=1}^d \partial_{x_i}\phi_i)^2 \ud
      t\ud\mu_U(x) \\
    & \le (d+2) \int_{I\times \mathbb{R}^d}
    \bigl((\Pi_v f)^2+(\partial_t \phi_0 )^2+ \sum_{i=1}^d
      (\partial_{x_i}\phi_i)^2\bigr) \ud t\ud\mu_U(x) \\
    &  \leftstackrel{\eqref{eqn:estdiv}}{\le}
    Cd(1+\dfrac{1}{\sqrt{m}T}+\dfrac{R}{\sqrt{m}}+RT)^2 \int_{I\times
      \mathbb{R}^d} (\Pi_v f)^2 \ud t\ud\mu_U(x),
  \end{align*}
  where the first inequality follows from Cauchy-Schwarz. 
\end{proof}

\begin{proof}[Proof of Lemma \ref{lem:cj} for bouncy particle sampler]

Similar to the proof for ZZ, we will expand $\| \J \|_{L^2(\lambda\times \stationary)}^2$ and organize its terms according to their powers on $v$ and whether $(-v\cdot \nn)_+$ appears in the expression. Terms that do not contain $(-v\cdot \nn)_+$ are identical to those for ZZ and thus calculations are omitted. 

Next we look at terms where $(-v\cdot \nn)_+$ appears twice, in which the overall power of $v$ is even so Lemma \ref{lem:vinteg} can be applied: \begin{equation}\label{eqn:bpsvnplsq}\begin{aligned}
     \int_{I\times \mathbb{R}^d\times \mathbb{R}^d}&  4(-v\cdot\nn)_+^2(v\cdot\nn)^2 (\phi\cdot\nabla_x U)^2 \ud t\ud\stationary(x,v)  \\ & =  \int_{I\times \mathbb{R}^d\times \mathbb{R}^d}  2(v\cdot\nn)^4 (\phi\cdot\nabla_x U)^2 \ud t\ud\stationary(x,v) \\ & =  \int_{I\times \mathbb{R}^d\times \mathbb{R}^d}  2\sum_{i,j,k,p=1}^d v_iv_jv_kv_p \nn_i \nn_j \nn_k\nn_p (\phi\cdot\nabla_x U)^2 \ud t\ud\stationary(x,v) \\ & =  \int_{I\times \mathbb{R}^d\times \mathbb{R}^d} (2\sum_{i=1}^dv_i^4 \nn_i^4+6\sum_{1\le i\neq j\le d}v_i^2v_j^2\nn_i^2\nn_j^2) (\phi\cdot\nabla_x U)^2 \ud t\ud\stationary(x,v) \\ & =  \int_{I\times \mathbb{R}^d} (6\sum_{i=1}^d\nn_i^4+6\sum_{1\le i\neq j\le d}\nn_i^2\nn_j^2) (\phi\cdot\nabla_x U)^2 \ud t\ud\mu_U(x) \\ & =  6\int_{I\times \mathbb{R}^d} (\sum_{i}\nn_i^2)^2 (\phi\cdot\nabla_x U)^2 \ud t\ud\mu_U(x) \\ & =  6\int_{I\times \mathbb{R}^d} (\phi\cdot\nabla_x U)^2 \ud t\ud\mu_U(x).
\end{aligned}\end{equation}
Cross terms with $(-v\cdot\nn)_+$ appearing once and the overall power of $v$ is even: \begin{align*}
    \int_{I\times \mathbb{R}^d\times \mathbb{R}^d}& \Bigl(4\partial_t\phi_0(-v\cdot\nn)_+(v\cdot\nn)(\phi\cdot \nabla_x U)+4\sum_{i,j=1}^d v_iv_j \partial_{x_i} \phi_j(-v\cdot\nn)_+(v\cdot\nn)(\phi\cdot \nabla_x U)\Bigr) \ud t\ud\stationary \\ & =-2\int_{I\times \mathbb{R}^d\times \mathbb{R}^d} \Bigl(\partial_t\phi_0(v\cdot\nn)^2+\sum_{i,j=1}^d v_iv_j \partial_{x_i} \phi_j(v\cdot\nn)^2\Bigr)(\phi\cdot \nabla_x U) \ud t\ud\stationary \\ & =-2\int_{I\times \mathbb{R}^d\times \mathbb{R}^d} \Bigl(\partial_t\phi_0\sum_{i=1}^d\nn_i^2+\sum_{i,j,p,q=1}^d v_iv_jv_pv_q\nn_p\nn_q \partial_{x_i} \phi_j\Bigr)(\phi\cdot \nabla_x U) \ud t\ud\stationary \\ & =-2\int_{I\times \mathbb{R}^d\times \mathbb{R}^d} \Bigl(\partial_t\phi_0+\sum_{i=1}^d v_i^4\nn_i^2 \partial_{x_i} \phi_i +\sum_{1\le i\neq j\le d} v_i^2 v_j^2\nn_j^2\partial_{x_i}\phi_i \\ & \hspace{10em} +2\sum_{1\le i\neq j\le d}v_i^2v_j^2 \nn_i\nn_j\partial_{x_i}\phi_j\Bigr)(\phi\cdot \nabla_x U) \ud t\ud\stationary \\ & =-2\int_{I\times \mathbb{R}^d} \Bigl(\partial_t\phi_0+3\sum_{i=1}^d \nn_i^2 \partial_{x_i} \phi_i +\sum_{1\le i\neq j\le d} \nn_j^2\partial_{x_i}\phi_i \\ & \hspace{10em} +2\sum_{1\le i\neq j\le d} \nn_i\nn_j\partial_{x_i}\phi_j\Bigr)(\phi\cdot \nabla_x U) \ud t\ud\mu_U(x) \\ & =-2\int_{I\times \mathbb{R}^d} \Bigl(\partial_t\phi_0+\sum_{i=1}^d \partial_{x_i} \phi_i   +2\sum_{i,j=1}^d \nn_i\nn_j\partial_{x_i}\phi_j\Bigr)(\phi\cdot \nabla_x U) \ud t\ud\mu_U(x)\\ & \le 2\int_{I\times \mathbb{R}^d} \Bigl(-(\partial_t\phi_0+\sum_{i=1}^d \partial_{x_i} \phi_i)(\phi\cdot \nabla_x U) + \sum_{i,j=1}^d(\nn_i^2\nn_j^2 (\phi\cdot\nabla_x U)^2 +(\partial_{x_i}\phi_j)^2)\Bigr)\ud t\ud\mu_U(x) \\ & = 2\int_{I\times \mathbb{R}^d} \Bigl(-(\partial_t\phi_0+\sum_{i=1}^d \partial_{x_i} \phi_i)(\phi\cdot \nabla_x U) +(\phi\cdot\nabla_x U)^2+ \sum_{i,j=1}^d(\partial_{x_i}\phi_j)^2\Bigr)\ud t\ud\mu_U(x) .
\end{align*}
Finally the cross terms with $(-v\cdot\nn)_+$ appearing once and an odd overall power on $v$, in which we again control by terms we have calculated above \begin{align*}
    -4\int_{I\times \mathbb{R}^d\times \mathbb{R}^d} & v\cdot (\partial_t \phi+\nabla_x\phi_0) (-v\cdot\nn)_+(v\cdot\nn)(\phi\cdot \nabla_x U) \ud t\ud\stationary\\ & \le 2 \int_{I\times \mathbb{R}^d\times \mathbb{R}^d}  \Bigl(|v\cdot (\partial_t \phi+\nabla_x\phi_0)|^2+ (-v\cdot\nn)_+^2(v\cdot\nn)^2(\phi\cdot \nabla_x U)^2 \Bigr)\ud t \ud \stationary \\ & \leftstackrel{\eqref{eqn:bpsvnplsq}}{\le}  \int_{I\times \mathbb{R}^d}  \Bigl(4|\partial_t \phi|^2+4|\nabla_x\phi_0|^2+ 3(\phi\cdot \nabla_x U)^2 \Bigr)\ud t \ud \mu_U(x).
\end{align*}
Therefore, combining these calculations, we obtain 
\begin{align*}
    \| \J  \|_{L^2(\lambda\times \stationary)}^2  
          & \le \int_{I\times \mathbb{R}^d} \Big((\partial_t\phi_0+\sum_{i=1}^d \partial_{x_i} \phi_i-\phi\cdot \nabla_x U)^2 +6\sum_{i,j=0}^d |\partial_{x_i} \phi_j|^2 \\ & \qquad +16(\phi\cdot\nabla_x U)^2   \Big)\ud t\ud\mu_U(x) \\
          & \leftstackrel{\eqref{eqn:divergence},\eqref{eqn:estdiv},\eqref{eqn:bps}}{\le} Cd(1+\dfrac{1}{\sqrt{m}T}+\dfrac{R}{\sqrt{m}}+RT)^2 \|\Pi_v f\|_{L^2(\lambda\times \mu_U)}^2. \qedhere
\end{align*}
\end{proof}
  
\section*{Acknowledgment}
This work is supported in part by National Science Foundation via grants CCF-1910571 and DMS-2012286.

\bibliographystyle{amsplain}
\bibliography{main}
	
\end{document}